\theoremstyle{definition}
\newtheorem{defn}{Definition}%[section]
\theoremstyle{plain}
\newtheorem{thm}{Theorem}
\newtheorem*{thm*}{Теорема}
\newtheorem{prop}{Proposition}
\newtheorem*{prop*}{Предложение}
\renewcommand{\abstractname}{}
\title{ $3j$-symbols for representation of the Lie algebra  $\mathfrak{gl}_3$ in the Gelfand-Tselin base }
\author{D.V. Artamonov}
\date{}
\begin{document}
    \maketitle

 \maketitle

\renewcommand{\abstractname}{}

\begin{abstract}
In the paper a simple explicit formula for an arbitrary  $3j$-symbol for the Lie algebra  $\mathfrak{gl}_3$ is given. It is expressed through a fraction of values of   hypergeometric functions when one substitutes $\pm 1$ instead of all it's arguments.  The problem of calculation of an arbitrary  $3j$-symbol is equivalent to the problem of calculation of an arbitrary Clebsh-Gordan coefficient  for the algebra  $\mathfrak{gl}_3$.  These coefficients play an important role in quantum mechanics in the  theory of quarks.
\end{abstract}

\section{Introduction}

Consider a tensor product of irreducible representation  $V$  and $W$ of the algebra $\mathfrak{gl}_3$  and let us split it into a sum of irreducibles:
 \begin{equation}
 \label{rzl}
   V\otimes W=\sum_{U,s} U^s,    \end{equation}

where    $U$ denotes possible types of irreducible representations that occur in this decomposition and the symbol  $s$  is indexing irreducible representations   $U^s$  of type $U$, occurring in the decomposition \footnote{ A precise definition of the index   $s$ is the following. We are writing a decomposition as follows:  $V\otimes W=\sum_{U} M_U\otimes U$,  where  $M_U$  is a linear space named the multiplicity space. Let   $\{e_i\}$ be it's base,  then  $U^s:=e_s\otimes U$. }.

 Let us choose  in these representations bases $\{v_{\mu}\}$, $\{w_{\nu}\}$, $\{u^s_{\rho}\}$.  The Clebsh-Gordon coefficients are  numeric coefficients $C^{U,\rho,s}_{V,W;\mu,\nu}\in\mathbb{C}$,  appearing in the decomposition
  
  \begin{equation}
\label{kg1}
  v_{\mu}\otimes w_{\nu}=\sum_{s,\rho} C^{U,\rho,s}_{V,W;\mu,\nu} u_{\rho}^s.
\end{equation}

Also we use the term the Clebsh-Gordan coefficients for coefficients   $D^{U,\rho,s}_{V,W;\mu,\nu}\in\mathbb{C}$, occuring in the decomposition
  
  \begin{equation}
\label{kg2}
  u_{\rho}^s = \sum_{\mu,\nu} D^{U,\rho,s}_{V,W;\mu,\nu} v_{\mu}\otimes w_{\nu}.
\end{equation}

%Здесь мы немного изменили по сравнению с формулой  \eqref{rzl} обозначение.  Среди индексов у коэффициента

These coefficients in the case of algebras   $\mathfrak{gl}_2$, $\mathfrak{gl}_3$ play an important role in the quantum mechanics.  The Clebsh-Gordan coefficients for the algebra   $\mathfrak{gl}_2$ are used in the spin theory   (see \cite{blb}),  and  the Clebsh-Gordan coefficients for the algebra $\mathfrak{gl}_3$ are used in the theory of quarks (see \cite{GrM}).  The problem of their calculation in the case   $\mathfrak{gl}_2$ is quite simple. There are explicit formulas that were first obtained by Van der Varden  \cite{Gkl0}.  The Clebsh-Gordan coefficients for the algebra  $\mathfrak{sl}_2$ allow to construct new realization of representations of real Lie groups associated with this Lie algebra ($SU(2)$ in \cite{blb}, \cite{Go}; \cite{GoGo}; $SO(3)$  in  \cite{GM}, \cite{GoGo}, \cite{Go}).  They appear in the elasticity theory (see \cite{Sel}, \cite{Sel2}, where actually the  $SO(3)$ group is considered).

In the application it is especially important to find the  Clebsh-Gordan coefficients in the case when in representations the Gelfand-Tsetlin base is taken.

The problem of calculation of  Clebsh-Gordan coefficients for  $\mathfrak{gl}_n$  when  $n\geq 3$  is much more difficult than in the case    $n=2$.  In the case  $n=3$  they for the first time were calculated in a series of papers by Biedenharn, Louck, Baird  \cite{bb1963}, \cite{bl1968}, \cite{bl1970}, \cite{bl19731}, \cite{bl19732}. In these papers the general case   $\mathfrak{gl}_n$ is considered,  but only in the case $n=3$  their calculations allow to obtain in principle a formula for a general Clebsh-Gordan coefficient.  The calculations are based on the following principle. Consider tensor opertors between two  representations of  $\mathfrak{gl}_n$, i.e. collections  $\{f_u\}$ of mappings

$$
f_{u}: V \rightarrow  W
$$

between representations of   $\mathfrak{gl}_n$,   the mappings   $\{f_u\}$  are in one-to-one correspondence with  vectors of a representation   $U$ of  $\mathfrak{gl}_n$.  Some condition  relating the actions of  $\mathfrak{gl}_n$ on these representations must hold.  The matrix elements of tensor operators are closely related to the Clebsh-Gordan coefficients  (The Wigner Eckart theorem ).  In papers  \cite{bb1963}-\cite{bl19732}  some explicit realization of these operators is given (in different papers cases of different representations $U$ are considered).   Such an explicit realization does not allow to obtain exlicit formulas for  matrix elements of a tensor operator corresponding tо a given $U$  but it allows to express them through the matrix elelments of tensor operators corresponding to   $U$,  considered in previous papers.

Unfortunately there is no explicit formula in their papers. It is just clear that it can be obtained.

Thus the problem of finding of an {\it explicit and simple
	 } formula for a general Clebsh-Gordan coefficient for the algebra $\mathfrak{gl}_3$ still remained unsolved.  A review of proceeding papers can be found in   \cite{a1}.
 In this paper for the first time a really  {\it explicit } formula for a general Clebsh-Gordan coefficient for   $\mathfrak{gl}_3$  in the decomposition \eqref{kg2} was obtained. That is an explicit formula of type   $D^{U,\gamma,s}_{V,W;\alpha,\beta}=...$ was derived. Unfortunately this formula is quite cumnbersome, thus the problem of derivation of a  {\it simple } formul remained unsolved.

%Собственно, нужно объяснить, с какой точки зрения существующее разложение просто.
For the purpose of caculation of Clebsh-Gordan coefficients it is necessary to choose an explicit realization of a representation of $\mathfrak{gl}_3$.
%Затем надо подобрать правильный запас спецфункций, чтобы в терминах их значений можно было бы выразить коэффициенты разложений.
A realization which is very convenient  for calculation was suggested in   \cite{bb1963}.  In this paper the following is proved. If one uses a realization of a representation in the space of functions on the Lie group   $GL_3$, then functions corresponding to Gelfand-Tsetlin base vectors can be expressed through the Gauss' hypergeometric function (see a modern viewpoint in   \cite{a2}).  This idea was used in \cite{a1}.

In the present paper we change an approach of  \cite{a1} and this allows to obtain a much more simpler result. Instead of coefficients in the decomposition \eqref{kg1} we are calculating the  $3j$-symbols (see their definition and their relation to Clebsh-Gordan coefficients in Section  \ref{3jcg}). We 
%write explicitly the selection ruler and we 
express the  $3j$-symbols through the values of an hypergeometric function.   As the result we manage to obtain  {\it explicit and simple}  formulas for a general Clebsh-Gordan coefficient for the algebras  $\mathfrak{gl}_3$.   The main result is formulated in  Theorem  \ref{ost},  where %a nessecary condition allowing
 %a  $3j$-symbol to be nonzero  (see  \eqref{osntf1})  and
  a formula for a   $3j$-in this case    (see  \eqref{osntf2}).

In Appendix  \ref{dop} we give s selection rulers for Clebsh-Gordan coefficients and   $3j$-symbols.

%Отметим, что так как коэффициенты действия генераторов алгебры  $\mathfrak{gl}_n$ являются частными случаями коэффициентов Клебша-Гордана, то настоящая работа дополняет работу  \cite{a3}, тем что в ней показывается, как в функциональном подходе строятся формулы действия генераторов в базисе Гельфанда-Цетлина.
\section{ The basic notions}

\subsection{  $A$-hypergeometric functions}

One can find information about a  $\Gamma$-series in \cite{GG}.

Let $B\subset \mathbb{Z}^N$  be a lattice, let $\mu\in \mathbb{Z}^N$ be a fixed vector. Define a  {\it  hypergeometric
	$\Gamma$-series }  in variables  $z_1,...,z_N$ by formulas

\begin{equation}
\label{gmr}
\mathcal{F}_{\mu}(z,B)=\sum_{b\in
	B}\frac{z^{b+\mu}}{\Gamma(b+\mu+1)},
\end{equation}
where $z=(z_1,...,z_N)$, and we use the multiindex notations

$$
z^{b+\mu}:=\prod_{i=1}^N
z_i^{b_i+\mu_i},\,\,\,\Gamma(b+\mu+1):=\prod_{i=1}^N\Gamma(b_i+\mu_i+1).
$$

Note that in the case when at least one of the components of the vector  $b+\mu$ is non-positive integer then the corresponding summand in  \eqref{gmr} vanishes. Thus in the considered below   $\Gamma$-series there are only finitely many terms. Also we write below factorials instead of $\Gamma$-functions.

A $\Gamma$-series satisfies the Gelfand-Kapranov-Zelevinsky system. Let us write it in the case $z=(z_1,z_2,z_3,z_4)$, $B=\mathbb{Z}<(1,-1,-1,1)>$:
\begin{align}
\begin{split}
\label{gkzs} \Big(\frac{\partial^2}{\partial z_1\partial
	z_4}-\frac{\partial^2}{\partial z_2\partial z_3}\Big)F_{\mu,B}&=0,
\\
z_1\frac{\partial}{\partial z_1}F_{\mu,B}+ z_2\frac{\partial}{\partial
	z_2}F_{\mu,B}& =(\mu_1+\mu_2)F_{\mu,B},\quad
\\
z_1\frac{\partial}{\partial z_1}F_{\mu,B}+
z_3\frac{\partial}{\partial z_3}F_{\mu,B}&=(\mu_1+\mu_3)F_{\mu,B},
\\
z_1\frac{\partial}{\partial z_1}F_{\mu,B}-z_4\frac{\partial}{\partial
	z_4}F_{\mu,B}&=(\mu_1-\mu_4)F_{\mu,B}.
\end{split}
\end{align}

\subsection{ A functional realization of a Gelfand-Tsetlin base}

In the paper Lie algebras and groups over   $\mathbb{C}$ are considered.

Functions on $GL_3$ form a representation of the group  $GL_3$. On a function  $f(g)$, $g\in GL_3$, an element  $X\in GL_{3}$  acts by right shifts

\begin{equation}
\label{xf} (Xf)(g)=f(gX).
\end{equation}

Passing to an infinitesimal action we obtain that on the space  of all functions on  $GL_3$ there exists an action of  $\mathfrak{gl}_3$.

Every finite dimensional irreducible representation can be realized as a subrepresentation in the space of functions.
Let
$[m_{1},m_2,m_{3}]$ be a highest weight, then in the space of functions there is a  highest vector with such a weight, which is written explicitly as follows.

Let $a_{i}^{j}$, $i,j=1,2,3$ -
be a function of a matrix element on the group  $GL_{3}$. Here $j$  is a row index and $i$  is a column index.
Also put

\begin{equation}
\label{dete}
a_{i_1,...,i_k}:=det(a_i^j)_{i=i_1,...,i_k}^{j=1,...,k},
\end{equation}

 where we take a determinant of a submatrix in a matrix  $(a_i^j)$,
formed by rows with indices  $1,...,k$  and columns with indices $i_1,...,i_k$.
 The operator $E_{i,j}$ acts onto determinants by transforming their column indices

\begin{equation}
\label{edet1}
E_{i,j}a_{i_1,...,i_k}=a_{\{i_1,...,i_k\}\mid_{j\mapsto i}},
\end{equation}

where  $.\mid_{j\mapsto i}$ denotes an operation of substitution $i$ instead of   $j$
$i$;  if  $j$  does not occur among $\{i_1,...,i_k\}$,  then we obtain zero.

One sees that a rising operator     $E_{i,j}$, $i<j$ tryes to change an index to a smaller one. Thus the function 
\begin{equation}
\label{stv}
v_0=\frac{a_{1}^{m_{1}-m_{2}}}{(m_1-m_2)!}\frac{a_{1,2}^{m_{2}-m_{3}}}{(m_2-m_3)!}\frac{a_{1,2,3}^{m_{3}}}{m_{3}!}
\end{equation}

is a highest vector  for the algebra  $\mathfrak{gl}_{3}$ with the weight
$[m_{1},m_{2},m_3]$.

%Если рассматривать старший вес с ненулевой компонентой  $m_n$ то во всех нижеследующих формулах необходимо заменить  $m_{n-1}\mapsto m_{n-1}-m_{n}$ и  умножить все выражения на   $a_{1,2,...,n}^{m_n}$. Чтобы сделать формулы менее громоздкими мы будем предполагать, что  $m_n=0$.

Let us write a formula for a function corresponding  to a Gelfand-Tselin diagram for  $\mathfrak{gl}_3$ (see definition of this base in   \cite{zh}). A diagram is an integer table of the following type in which the betweeness conditions hold

\begin{align*}
\begin{pmatrix}
m_{1} && m_{2} &&0\\ &k_{1}&& k_{2}\\&&s
\end{pmatrix}
\end{align*}
A formula for a function corresponding to this diagram is given in the Theorem proved in \cite{bb1963}.

\begin{thm}\label{vec3}  Put $B_{GC}=\mathbb{Z}<(0,1,-1,-1,1,0)>$,   $\mu=(m_1-k_1,s-m_{2},k_{1}-s_{},m_{2}-k_{2},0,k_2)$, then to a diagram there corresponds  a function $$
\mathcal{F}_{\mu}(a_3,a_{1},a_{2},a_{1,3},a_{2,3},a_{1,2},B_{GC})
$$

\end{thm}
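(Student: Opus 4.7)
The plan is to verify three properties that pin down the Gelfand--Tsetlin vector up to scalar, and then fix the scalar. Those properties are: (i) $\mathcal F_\mu$ lies in the irreducible submodule $V_{[m_1,m_2,0]}$; (ii) it is a Cartan weight vector of weight $(s,\,k_1+k_2-s,\,m_1+m_2-k_1-k_2)$, the weight dictated by the diagram; (iii) upon restriction to the upper-left $\mathfrak{gl}_2$ it lies in the isotypic component of highest weight $(k_1,k_2)$.

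For (ii), each monomial of $\mathcal F_\mu$ is a product of determinants $a_I$, and by \eqref{edet1} the operator $E_{i,i}$ multiplies such a monomial by the total number of appearances of the column index $i$, weighted by exponents. Plugging in $b+\mu$ with $b=t(0,1,-1,-1,1,0)$, one checks that the $t$-dependence cancels in every $E_{i,i}$-eigenvalue, recovering exactly the weight above. For (i), formula \eqref{edet1} shows that each $E_{i,j}$ preserves the subspace of polynomials of fixed column multidegree in $1,2,3$. All monomials of $\mathcal F_\mu$ carry column multidegree $(m_1,m_2,0)$, matching that of the highest vector $v_0$ in \eqref{stv} (written for $m_3=0$); so $\mathcal F_\mu$ lies in the $U(\mathfrak{gl}_3)$-span of $v_0$, which is precisely $V_{[m_1,m_2,0]}$.

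For (iii), I would exploit the fact that the generator $(0,1,-1,-1,1,0)$ of $B_{GC}$ encodes the binomial piece of the Pl\"ucker identity $a_1 a_{2,3} - a_2 a_{1,3} + a_3 a_{1,2}=0$ among the $2$-minors, so that the second-order GKZ equation in \eqref{gkzs} is satisfied by $\mathcal F_\mu$ as a polynomial identity in the $a_I$. Combined with the first-order GKZ equations, this lets one translate membership in the $(k_1,k_2)$-isotypic component into vanishing of suitable powers of $E_{1,2}$ and $E_{2,1}$ on $\mathcal F_\mu$, which can be checked by index-shift manipulations of the sum \eqref{gmr}. The main obstacle will be fixing the overall normalization: I expect to isolate a boundary monomial (the value of $t$ at which one of the factorials in the denominator becomes minimal) and match its coefficient, including the $1/\Gamma(b+\mu+1)$ factors, against a direct application of Zhelobenko's lowering operators to $v_0$. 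The $\Gamma$-denominators in \eqref{gmr} are precisely what make this match work without spurious combinatorial factors.
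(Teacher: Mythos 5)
The paper itself does not prove Theorem \ref{vec3}; it imports it from Biedenharn--Baird \cite{bb1963} (in the $\Gamma$-series form of \cite{a2}), so your attempt has to be judged on its own merits. Your framing --- pin the Gelfand--Tsetlin vector down, up to scalar, by the chain $\mathfrak{gl}_1\subset\mathfrak{gl}_2\subset\mathfrak{gl}_3$ and multiplicity-free branching --- is the right one, and (ii) checks out: the $t$-dependence cancels in every $E_{i,i}$-eigenvalue and the weight is $(s,\,k_1+k_2-s,\,m_1+m_2-k_1-k_2)$. In (i) there is a small but real omission: every monomial indeed has multidegree $(m_1-m_2,m_2,0)$ in the first- and second-order minors, matching $v_0$, but equal multidegree does not by itself place $\mathcal F_\mu$ in $U(\mathfrak{gl}_3)v_0$; you must also invoke irreducibility of the fixed-multidegree subspace of functions on $GL_3$ (multiplicity-freeness of the coordinate ring of the base affine space). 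Standard, but it is the whole reason the functional realization is used.

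The genuine gap is in (iii). Write a weight vector $w$ of $\mathfrak{gl}_2$-weight $(s,k_1+k_2-s)$ as $w=\sum_\lambda w_\lambda$ over the isotypic components $\lambda=[\lambda_1,\lambda_2]$ with $\lambda_1+\lambda_2=k_1+k_2$. Then $E_{1,2}^{k_1-s+1}w=0$ is equivalent to $\lambda_1\le k_1$ for every component present, and $E_{2,1}^{s-k_2+1}w=0$ is equivalent to $\lambda_2\ge k_2$, i.e.\ again to $\lambda_1\le k_1$: the two vanishing conditions are one and the same, and they do not exclude admissible components with $\max(s,k_1+k_2-s)\le\lambda_1<k_1$ (for instance $m_1=4$, $m_2=1$, $k_1=3$, $k_2=0$, $s=2$ leaves a possible spurious $[2,1]$ component). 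So ``vanishing of suitable powers of $E_{1,2}$ and $E_{2,1}$'' does not characterize membership in the $[k_1,k_2]$-isotypic component, and this is exactly the step you left as a sketch. The repair is either to compute the $\mathfrak{gl}_2$ Casimir eigenvalue, or --- closer to your computation and more economical --- to observe that for $s=k_1$ the series degenerates to the single monomial $a_3^{m_1-k_1}a_1^{k_1-m_2}a_{1,3}^{m_2-k_2}a_{1,2}^{k_2}$, visibly a $\mathfrak{gl}_2$-highest vector of weight $[k_1,k_2]$, and then to check by the Leibniz rule that $E_{2,1}^{k_1-s}$ applied to it reproduces $\mathcal F_\mu$ up to a constant; that computation also yields the $1/\Gamma(b+\mu+1)$ coefficients and the normalization at once, making your separate ``boundary monomial'' step unnecessary.
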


In \cite{bb1963} an expression involving the Gauss' hypergeometric function is given. A modern form of this formula involving  a   $\Gamma$-series was presented in  \cite{a2}.

To make notations shorter in the case when we use a   $\Gamma$-series corresponding to a lattice  $B_{BG}$  from Theorem  \ref{vec3} we  omit the notation $B_{GC} $ and we write just  $\mathcal{F}_{\mu}(a)$.  In the case when we use another lattice we do not omit it.

Note also that the first equation form the GKZ system looks as follows

\begin{equation}
\label{gkz}
\mathcal{O}\mathcal{F}=0,\,\,\,\mathcal{O}=\frac{\partial^2}{\partial a_1\partial a_{2,3}}-\frac{\partial^2}{\partial a_2\partial a_{1,3}}.
\end{equation}

\subsection{ A-GKZ system and it's solutions. A-GKZ realization }

\subsubsection{ A-GKZ system. A  base  $F_{\mu}$    and a base   $\tilde{F}_{\mu}$ in it's  solution space}

Instead of determinants   $a_X$, $X\subset \{1,2,3\}$,  that satisfy the Plucker relations let us introduce variables  $A_X$, $X\subset \{1,2,3\}$,  we suppose that  $A_X$ are skew-symmetric functions of   $X$.

Let us return to the GKZ-system   \eqref{gkz}   and let us change the differential operators that define it. 
Consider  functions $F(A)$, that satisfies the equations 

\begin{equation}
\label{agkz}
\bar{\mathcal{O}}_{A}F=0,   \,\,\, \bar{\mathcal{O}}_{A}=\frac{\partial^2}{\partial A_1\partial A_{2,3}}-\frac{\partial^2}{\partial A_2\partial A_{1,3}}+\frac{\partial^3}{\partial A_3\partial A_{1,2}}
\end{equation}

This system is called an antisymmetrized Gelfand-Kapranov-Zelevinsky system(or A-GKZ for short).

Let us find a base in the space of polynomial solutions of such a system.

Using the equality  (65) from  \cite{a1} one can show that the following function is a solution 

\begin{equation}
\label{Fmu}
F_{\mu}(A):=\sum_{s\in\mathbb{Z}_{\geq 0}}  q^{\mu}_s \zeta_A^{s}\mathcal{F}_{\mu-s(e_3+e_{1,2})}(A), %\frac{1}{k(k+1)+k<\mu,1>}(A_1A_{2,3}-A_{2}A_{1,3})^s\frac{A_3^{m_1-k_1-s}}{(m_1-k_1-s)!}\frac{A_{1,2}^{k_2-s}}{(k_2-s)!}\mathcal{F}_{\mu-s(e_1+e_{2,3})}(A)
\end{equation}

where  \begin{align}\begin{split}\label{cs} &t^{\mu}_0=1, \,\,\,\,\, t^{\mu}_s=\frac{1}{s(s+1)+s(\mu_1+\mu_2+\mu_{1,3}+\mu_{2,3})}, \text{ при }s>0\\& q_{\mu}^s=\frac{t_s^{\mu}}{\sum_{s'\in\mathbb{Z}_{\geq 0}}  t_{s'}^{\mu}},  \,\,\,\,\,
\zeta_A=A_1A_{2,3}-A_{2}A_{1,3}.
\end{split}\end{align}

One has  $F_{\mu}=\mathcal{F}_{\mu}\cdot  const$ modulo the Plucker relations.

In the space of solutions of the system   \eqref{agkz} one can construct another base.  Put
$$
v=e_1+e_{2,3}-e_{2}-e_{1,3},\,\,\,\, r=e_3+e_{1,2}-e_1-e_{2,3},
$$
and for all $s\in\mathbb{Z}_{\geq 0}$ consider a function

\begin{equation}
\mathcal{F}^s_{\mu}(A):=\sum_{t\in \mathbb{Z}}\frac{(t+1)...(t+s-1)A^{\mu+tv}}{\Gamma(\mu+tv+1)}
\end{equation}

One proves directely that 

\begin{equation}
\label{Ftildmu}
\tilde{F}_{\mu}(A)=\sum_{s\in\mathbb{Z}_{\geq 0}} \frac{(-1)^{s}}{s!} \mathcal{F}^s_{\mu-sr}(A)
\end{equation}

is also a solution of   \eqref{agkz}.   Let us introduce  an order on shift vectors considered   $mod B_{GC}$
\begin{equation}
\label{por}
\mu\preceq \nu \Leftrightarrow  \mu=\nu-sr \,\, mod B_{GC}\,\,\, s\in\mathbb{Z}_{\geq 0}.
\end{equation}

Then considering supports of functions   $F_{\mu}(A)$, $\tilde{F}_{\mu}(A)$ one comes to a conclusion that the base   $\tilde{F}_{\mu}$ is related to the collection of functions  $F_{\mu}$ by a   low-unitriangular relatively the ordering    \eqref{por} linear transformation. That is

$$
\tilde{F}_{\mu}=\sum_{s\in \mathbb{Z}_{\geq 0}}d_s F_{\mu-sr},\,\,\, d_0=1
$$

Hence functions   $\tilde{F}_{\mu}(A)$ form a base in the solution space of the system   \eqref{agkz}.

\subsubsection{ A realization of a representation in the space of solutions of the A-GKZ system}
Define an action onto the variables  $A_X$ of the algebra    $\mathfrak{gl}_3$  by the ruler:

$$
E_{i,j}A_X=\begin{cases}A_{X\mid_{j\mapsto i}}, \text{ если }j\in X, \,\,\,\text{см.} \eqref{edet1} \\0\text{ otherwise. }\end{cases}
$$

One easily checks that this is an action of the Lie algebra.  One continues this action to the polynomials in $A_X$ by the Leibnitz ruler. 

%Идеал соотношений между определителями  $a_X$ порождается соотношением

The operator  $ \bar{\mathcal{O}}_{A}$ commutes with the action of   $\mathfrak{gl}_3$.  Hence the solution space of the system \eqref{agkz} is a representation of $\mathfrak{gl}_3$.

When one applies the Plucker relations  (i.e. changes   $A_X\mapsto a_X$) the considered realization is transformed to the functional relization.
Thus the functions  $F_{\mu}$ for shift vectors $\mu$, corresponding to all possible Gelfand-Tsetlin diagrams of a irreducible representation with the highest weight  $[m_1,m_2,0]$ (see Theorem \ref{vec3}) form a representation with this highest weight.

The obtained realization is called the A-GKZ realization.

One easily checks that if one substracts from   $\mu$ the vector $r$ then a diagram transforms as follows  $k_1\mapsto k_1-1$, $k_2\mapsto k_2+1$.  Thus  $\tilde{F}_{\mu}$ also form a base in the A-GKZ realization.

\subsubsection{ An explicit form of an invariant scalar product in the A-GKZ realization}

In the A-GKZ realization one can write explicitely an invariant scalar product. Between two monomials the scalar product is define as follows:

\begin{equation}
<A_{X_1}^{\alpha_1}...A_{X_n}^{\alpha_n},A_{Y_1}^{\beta_1}...A_{Y_m}^{\beta_m}>
\end{equation}
is nonzero if and only if   $n=m$, and (maybe a permutation is needed)  $X_1=Y_1$ and $\alpha_1=\beta_1$,..., $X_n=Y_n$ and $\alpha_n=\beta_n$. In this case

\begin{equation}
<A_{X_1}^{\alpha_1}...A_{X_n}^{\alpha_n},A_{X_1}^{\alpha_1}...A_{X_n}^{\alpha_n}>=\alpha_1!\cdot...\cdot\alpha_n!
\end{equation}

One easyly proves that this product is invariant.

A function in variables  $A_X$, $X\subset \{1,2,3\}$  we denote just as $f(A)$.  Then the scalar product can be rewritten using the multi-index notations as follows.
Define an action

\begin{equation}
\label{dve}
f(A)\curvearrowright  h(A):=f(\frac{d}{dA})h(A),
\end{equation}
then

\begin{equation}
\label{skd}
<f(A),h(A)>= f(A)\curvearrowright  h(A)\mid_{A=0}.
\end{equation}

Due to the symmetry of the scalar product one can write  $ <f(A),h(A)>= h(A)\curvearrowright  f(A)\mid_{A=0}.$

\subsubsection{ A relation between a base  $F_{\mu}$  and a base  $\tilde{F}_{\mu}$  of the A-GKZ realization}

Using the constructed scalar product one can find a relation between two basis of the A-GKZ realization. 

Note that  $\mathcal{F}_{\mu}=const F_{\mu}+pl$,   where  $pl=0$ modulo Plucker relation.  Then for every function  $g(A)$, which is a solution of the A-GKZ system one has

$$
<h,\mathcal{F}_{\mu}>=const<h, F_{\mu}>.
$$

Thus to prove that 

$$
\tilde{F}_{\mu}=\sum_{s\in \mathbb{Z}_{\geq 0}}d_s F_{\mu-sr}
$$

it is sufficient to show that

\begin{equation}
\label{ur}
<\tilde{F}_{\mu},\mathcal{F}_{\nu}>=\sum_{s\in \mathbb{Z}_{\geq 0}}d_s <F_{\mu-sr},\mathcal{F}_{\nu}>
\end{equation}

Using the formula \eqref{dve} and definitions of  $\tilde{F}_{\mu}$, $\mathcal{F}_{\nu}$,  one gets that the scalar product of these function is nonzero if and only if   $\nu \preceq \mu$, that is $\nu=\mu-sr\,\,\, mod B_{GC}$. Under this condition one has

$$
<\tilde{F}_{\mu},\mathcal{F}_{\mu-sr}>=\frac{(-1)^{s}}{s!}\mathcal{F}_{\mu-sr}^s(1),
$$
where on the right hand side one writes a result of substitution of  $1$ instead of all arguments of $\mathcal{F}_{\mu-sr}^s(A)$.

Now let us find a scalar product  $<F_{\mu},\mathcal{F}_{\nu}>$.   Note that   $<\zeta^k h(A),\mathcal{F}_{\nu} >$,  where $k>0$, equal to $0$ (due to the formula  \eqref{dve} and the fact that $\zeta$ acts as a GKZ operator  wich send to $0$ the function  $\mathcal{F}_{\nu}$).  Thus the scalar product only with the first summand in  \eqref{Fmu} is non-zero, thus 

$$
<F_{\mu},\mathcal{F}_{\nu}>=<\mathcal{F}_{\mu},\mathcal{F}_{\nu}>
$$

Using the formula  \eqref{dve},  one obtaines that this expression is non-zero only if $\mu=\nu mod B_{GC}$, in this case it equals   $\mathcal{F}_{\mu}(1)$.

Thus  \eqref{ur} gives that

\begin{equation}
\label{ds}
\frac{(-1)^{s+1}}{s!}\mathcal{F}_{\mu-sr}^s(1)=d_s\mathcal{F}_{\mu-sr}(1) \Rightarrow d_s=\frac{(-1)^{s}   \mathcal{F}_{\mu-sr}^s(1) }{\mathcal{F}_{\mu-sr}(1) }
\end{equation}

We need also an invertion of that expression.

\begin{equation}
\label{fs0}
F_{\mu}=\sum_{s\in \mathbb{Z}_{s\geq 0}}f_s\tilde{F}_{\mu-sr}
\end{equation}

Thus we nedd to find an inverse matrix to the mentioned low-unitriangular matrix. One has

$$
\begin{pmatrix}
1&0&0...\\
\frac{ - \mathcal{F}_{\mu-r}^1(1) }{\mathcal{F}_{\mu-r}(1) } &1 &0...\\
\frac{  \mathcal{F}_{\mu-2r}^2(2) }{\mathcal{F}_{\mu-2r}(1) } &... &1\\
...\\
\end{pmatrix}^{-1}=\begin{pmatrix}
1&0&0...\\
\frac{  \mathcal{F}_{\mu-r}^1(1) }{\mathcal{F}_{\mu-r}(1) } &1 &0...\\
\frac{ - \mathcal{F}_{\mu-2r}^2(2) }{\mathcal{F}_{\mu-2r}(1) } &... &1\\
...\\
\end{pmatrix}
$$

Thus in \eqref{fs0} one has

\begin{equation}
\label{kfs}
f^{\mu}_s=\frac{(-1)^{s+1}   \mathcal{F}_{\mu-sr}^s(1) }{\mathcal{F}_{\mu-sr}(1) }
\end{equation}

\section{ A solution of the multiplicity problem for the Clebsh-Gordan coefficients }
\label{krtn}

In the case of the algebra  $\mathfrak{gl}_2$ different representation $U^s$ occurring in the decomposition \eqref{rzl} have different highest weights.  Thus one can use the highest weight as the index   $s$.  In the case $\mathfrak{gl}_3$ the situation is much more difficult - there appears the multiplicity problem: in the decomposition  \eqref{rzl}  a representation   $U$ of a given highest weight can occur with some multiplicity.
%\footnote{ При этом для более строго определения индекса  $s$ нужно вводить пространство кратности и  $s$  - индекс базисного вектора в этом пространстве }.

In the paper \cite{a1} the following solution of the problem of an explicit description of representations  $U^s$ occurring in \eqref{rzl}. One realizes  $V\otimes W$ in the space of functions on a product of groups $GL_3\times GL_3$.  The functions of a matrix element on the first factor are denoted as  $a_i^j$,  and on the second as   $b_i^j$.  Introduce functions on   $GL_3\times GL_3$:

\begin{align}
\begin{split}
\label{aab}
&(ab)_{i_1,i_2}:=det\begin{pmatrix} a_i^1\\ b_i^1
 \end{pmatrix}_{i=i_1,i_2},\,\,\,\, (aabb)_{i_1,i_2,i_3,i_4}:=a_{i_1,i_2}b_{i_3,i_4}-a_{i_3,i_4}b_{i_1,i_2},\\
 &(aab)=det\begin{pmatrix} a_i^1\\ a_i^2\\b_i^1
 \end{pmatrix}_{i=1,2,3}, (abb)=det\begin{pmatrix} a_i^1\\ b_i^1\\b_i^2
 \end{pmatrix}_{i=1,2,3}
\end{split}\end{align}

Consider a tensor product   $V\otimes W$ of representation with highest weights   $[m_1,m_2,0]$ and  $[m'_1,m'_2,0]$\footnote{Below we put  $m_3=0$, $m'_3=0$}. Then a base in the space of  $\mathfrak{gl}_3$-highest vectors is formed by the following functions. Put

\begin{equation}
\label{foo}
f(\omega,\varphi,\psi,\theta):=a_1^{\alpha}b_1^{\beta}a_{1,2}^{\gamma}b_{1,2}^{\delta}(ab)_{1,2}^{\omega}(abb)^{\varphi}(aab)^{\psi}(aabb)_{1,2,1,3}^{\theta},
\end{equation}

where

\begin{align}
\begin{split}
\label{usl0}
&\alpha+\omega+\varphi=m_1-m_2,\,\,\,\gamma+\theta+\psi=m_2,\\
&\beta+\omega+\psi=m'_1-m'_2,\,\,\,\delta+\varphi+\theta=m'_2.
\end{split}
\end{align}

The function  \eqref{foo}  is indexed not by all exponents. The reason in that the exponents $\alpha,\beta,\gamma,\delta$ can be obtained from  \eqref{usl0}.

\begin{prop}
	\label{mlt}
	In the space of   $\mathfrak{gl}_3$-highest vector there is a base consisting of functions of type   $f(0,\varphi,\psi,\theta)$ and $f(\omega,\varphi,\psi,0)$.
\end{prop}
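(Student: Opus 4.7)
The plan is to exhibit one quadratic syzygy among the eight highest vectors of \cite{a1} and use it to reduce every monomial \eqref{foo} to the asserted form, then match the dimension against the Littlewood-Richardson multiplicities. The key syzygy is
\begin{equation*}
(ab)_{1,2}\cdot(aabb)_{1,2,1,3}\;=\;a_1\,b_{1,2}\cdot(aab)\;+\;b_1\,a_{1,2}\cdot(abb),
\end{equation*}
valid as an identity of functions on $GL_3\times GL_3$. Using \eqref{aab} this reduces to the polynomial identity
\begin{equation*}
(a_1^1 b_2^1-a_2^1 b_1^1)(a_{1,2}b_{1,3}-a_{1,3}b_{1,2})=a_1^1 b_{1,2}(b_1^1 a_{2,3}-b_2^1 a_{1,3}+b_3^1 a_{1,2})+b_1^1 a_{1,2}(a_1^1 b_{2,3}-a_2^1 b_{1,3}+a_3^1 b_{1,2}),
\end{equation*}
and after the manifest cancellations the residual six terms combine in pairs via the trivial determinantal identities $a_1^1 a_{2,3}-a_2^1 a_{1,3}+a_3^1 a_{1,2}=0$ and $b_1^1 b_{2,3}-b_2^1 b_{1,3}+b_3^1 b_{1,2}=0$, each being the cofactor expansion of a $3\times 3$ determinant with a repeated first row.

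Given the syzygy I would argue by induction on $k=\min(\omega,\theta)$ that every $f(\omega,\varphi,\psi,\theta)$ lies in the span of the two claimed types. For $k\geq 1$, replacing one factor of $(ab)_{1,2}(aabb)_{1,2,1,3}$ by the right-hand side of the syzygy yields
\begin{equation*}
f(\omega,\varphi,\psi,\theta)=f(\omega-1,\varphi,\psi+1,\theta-1)+f(\omega-1,\varphi+1,\psi,\theta-1),
\end{equation*}
where in each summand the exponents $\alpha,\beta,\gamma,\delta$ readjust automatically to preserve \eqref{usl0} (the syzygy is weight-homogeneous). Both summands have strictly smaller $k$, and induction completes the reduction to $\omega=0$ or $\theta=0$.

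To confirm the spanning set is actually a basis I count: within the weight space of a fixed irreducible $U$ of highest weight $[n_1,n_2,n_3]$, the relations \eqref{usl0} force $\omega-\theta=n_2-m_2-m_2'=:B$, so imposing $\omega=0$ or $\theta=0$ uniquely determines $(\omega,\theta)=(\max(B,0),\max(-B,0))$; a single free parameter $\varphi$ then remains, whose admissible range (fixed by non-negativity of $\alpha,\beta,\gamma,\delta$) matches the Littlewood-Richardson multiplicity of $U$ in $V\otimes W$ for $\mathfrak{gl}_3$. The main obstacle is the identification of the syzygy from among the \textit{a priori} possible monomials of the correct weight and bi-degree; once it is in hand the rest of the proof is purely combinatorial.
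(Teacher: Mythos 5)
The paper itself offers no proof of Proposition \ref{mlt}; it is imported from \cite{a1}. Your argument is therefore a self-contained alternative, and its core checks out. The syzygy
\begin{equation*}
(ab)_{1,2}(aabb)_{1,2,1,3}=a_1b_{1,2}(aab)+b_1a_{1,2}(abb)
\end{equation*}
is a correct identity: after the two manifest cancellations the six remaining terms group as $-b_1^1b_{1,2}\bigl(a_1^1a_{2,3}-a_2^1a_{1,3}+a_3^1a_{1,2}\bigr)-a_1^1a_{1,2}\bigl(b_1^1b_{2,3}-b_2^1b_{1,3}+b_3^1b_{1,2}\bigr)$, and both brackets vanish as you say. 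The syzygy is compatible with \eqref{usl0} (in the first summand $\alpha,\delta,\psi$ go up by one, in the second $\beta,\gamma,\varphi$ do, and $\omega,\theta$ drop by one in both), so the recursion $f(\omega,\varphi,\psi,\theta)=f(\omega-1,\varphi,\psi+1,\theta-1)+f(\omega-1,\varphi+1,\psi,\theta-1)$ is exactly right, all intermediate exponents stay non-negative, and the induction on $\min(\omega,\theta)$ proves that the proposed family spans --- granted, of course, the unproved-here input that the full family \eqref{foo} spans the space of $\mathfrak{gl}_3$-highest vectors.

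The gap is the independence step. ``Base'' requires more than spanning, and you settle it by asserting that the number of admissible $\varphi$ for the unique pair $(\omega,\theta)=(\max(B,0),\max(-B,0))$ equals the Littlewood--Richardson multiplicity of $U$ in $V\otimes W$. That strategy (spanning set of cardinality equal to the dimension is a basis) is legitimate and not circular, but the asserted equality of the two counts is precisely where the work lies: both sides are lengths of integer intervals cut out by several inequalities (non-negativity of $\alpha,\beta,\gamma,\delta,\varphi,\psi$ on one side, the $\mathfrak{gl}_3$ Littlewood--Richardson inequalities on the other), and matching them requires genuine casework on the sign of $B$ and on which constraint is active. As written this is a claim, not a proof. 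Either carry out that comparison explicitly, or bypass it with a direct independence argument --- for instance, a leading-monomial computation showing that for $\omega\theta=0$ the exponent vector of a suitable extreme monomial of $f(\omega,\varphi,\psi,\theta)$ determines $(\omega,\varphi,\psi,\theta)$; the exponents of $a_1,b_1,a_{1,2},b_{1,2}$ already nearly do this.
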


Thus an index   $s$ from  \eqref{rzl} runs through the set of functions  $f(0,\varphi,\psi,\theta)$ and  $f(\omega,\varphi,\psi,0)$, where  $f$ is defined in  \eqref{foo},  and the exponents satisfy conditions \eqref{usl0}.  One can identify a function with  it's exponents   $\alpha,...,\theta$.

\section{$3j$-symbols and Clebsh-Gordan coefficients}

\subsection{A relation to the Clebsh-Gordan coefficients}
\label{3jcg}

Let us be given representations  $V$, $W$, $U$ of the Lie algebra  $\mathfrak{gl}_3$. Choose in them bases  $\{v_{\mu}\}$, $\{w_{\nu}\}$, $\{u_{\rho}\}$. Then a  $3j$-symbol is a collection of numbers

\begin{equation}
\label{3j}
\begin{pmatrix}
V& W& U\\ v_{\mu} & w_{\nu} & u_{\rho}
\end{pmatrix}^s,
\end{equation}

 such that the value
 
  $$
  \sum_{\mu,\nu,\rho}\begin{pmatrix}
  V& W& U\\ v_{\mu} & w_{\nu} & u_{\rho}
  \end{pmatrix}^s    v_{\mu} \otimes  w_{\nu}  \otimes  u_{\rho}
  $$
  
   is  $\mathfrak{gl}_3$ semi-invariant.  The  $3j$-symbols with the same inner indices form a linear space.  An index  $s$ is indexing basic   $3j$-symbols with the same inner indices.
  
These coefficients are closely related to the Clebsh-Gordan coefficients. Indeed let us be given a decomposition into a sum of irreducible representations:
 \begin{equation}
 \label{rzl9}
   V\otimes W=\sum_s U^s.    \end{equation}
  
 Take basis $\{v_{\mu}\}$, $\{w_{\nu}\}$, $\{u^s_{\rho}\}$. The Clebsh-Gordan coefficients are coefficients in the decomposition
   
  
\begin{equation}
\label{ur}
% v_{\mu}\otimes w_{\nu}=\sum_{\rho} C^{U,\rho,s}_{V,W;\mu,\nu} u_{\rho}^s
%
%
u_{\rho'}^s = \sum_{\mu,\nu} D^{U,\rho',s}_{V,W;\mu,\nu} v_{\mu}\otimes w_{\nu}.
\end{equation}

%Multiply  \eqref{rzl9} by the
Consider a  representation  $\bar{U}$, contragradient to  $U$, and take in   $\bar{U}$ a base  $\bar{u}_{\rho}$,  dual to   $u_{\rho}$
There exists a mapping  $U\otimes \bar{U} \rightarrow {\bf 1}$  into a trivial representation, such that  $u_{\rho'}\otimes \bar{u}_{\rho}\mapsto  \delta_{\rho,\rho'}$,  where  $\delta_{\rho,\rho'}$ is the Cronecer symbol.

Multiply  \eqref{ur} by  $\bar{u}_{\rho}$, take a sum over  $\rho$, one gets

$$
{\bf 1}=\sum_{\mu,\nu,\rho}D^{U,\rho',s}_{V,W;\mu,\nu} v_{\mu}\otimes w_{\nu}\otimes \bar{u}_{\rho}.
$$

Thus one has

$$
D^{U,\gamma,s}_{V,W;\mu,\nu}=\begin{pmatrix}
V& W& \bar{U}\\ v_{\mu} & w_{\nu} & \bar{u}_{\rho}
\end{pmatrix}^s.
$$
This formula allows to identify the multiplicity spaces for the  Clebsh-Gordan coefficients and for the  $3j$-symbols.

Thus the problem of calculation of the Clebsh-Gordan coefficients and  the  $3j$-symbols are equivalent.
 % В частности, индексы кратности  $s$  для коэффициентов  Клебша-Гордана и  $3-j$ символов совпадаеют.

\subsection{$3j$-symbols in functional realization}

In the functional realization a   $3j$-symbol for representations $V$, $W$, $U$  is described as follows.  Let the representations be realized in the space of functions on   $GL_3\times GL_3\times GL_3$.  We can put  $m_3=0$, $m'_3=0$.   Functions of matrix elements on the factors   $GL_3$  we denote as   $a_i^j$, $b_i^j$, $c_i^j$.  Analogous letters denote the determinants of matrices composed of matrix elements.

Decompose a  tensor product $V\otimes W\otimes U$ into a sum of irreducible representations and take  one of the occurring  trivial representations

  $$V\otimes W\otimes U={\bf 1}^s\oplus...,$$

 where ${\bf 1}^s$ is one of the occurring trivial representations.  More precise one can write  $V\otimes W\otimes U=\Big ( {\bf 1}\otimes M \Big ) \oplus...,$  where $M$ is a linear space called the multiplicity space. Choose in   $M$ a base  $\{e_i\}$ and denote ${\bf 1}^s:={\bf 1}\otimes e_s$.
In this representation several trivial representations can occur and   $s$ is their index  \footnote{Let us mention papers \cite{kl},  \cite{tk}, \cite{tk1}, where analogous approach to the decomposition of a double tensor product is used}.

Let base vectors be indexed by diagrams:

 \begin{align}
 \begin{split}
 \label{3d}
 &v_{\mu}=\begin{pmatrix}
 m_{1} && m_{2} &&0\\ &k_{1}&& k_{2}\\&&s
 \end{pmatrix},\,\,\,   w_{\nu}=\begin{pmatrix}
 m'_{1} && m'_{2} &&0\\ &k'_{1}&& k'_{2}\\&& s'
 \end{pmatrix},\\
 &u_{\rho}=\begin{pmatrix}
M_1 &&M_2 &&0\\ &K_1 &&K_2\\&&S
 \end{pmatrix}
 \end{split}
 \end{align}

 One has

 \begin{equation}
 \label{3ddu}
\bar{ u}_{\rho}=\begin{pmatrix}
 -M_3 &&-M_2 &&-M_1\\ &-K_2 &&-K_1\\&&-S
 \end{pmatrix}
 \end{equation}

 \subsection{  An explicit form of a  $\mathfrak{gl}_3$-invariant in  $V\otimes W\otimes U$}

  Let us prove that the highest vector of a representation  ${\bf 1}^s$  must be of type  (or linear combination of them)

  \begin{equation}
  \label{skob}
g=\prod_i (\underbrace{a\cdots a}_{k^i_1}\underbrace{b\cdots b}_{k^i_2}\underbrace{c\cdots c}_{k^i_3}),\end{equation}

  where, by analogy with \eqref{aab} as determinants we introduce expressions  $(abc)$, $(aac)$, $(acc)$, also put $(bcc)$, $(bbc)$. %,  а также выражения  $(aacc)$, $(bbcc)$.
   \begin{align*}&(aabbcc):=(\tilde{a}\tilde{b}\tilde{c}),\,\,\,\tilde{a}_1^1:=a_{2,3},\,\,\,\tilde{a}_2^1:=-a_{1,3},\,\,\,\tilde{a}_3^1:=a_{1,2},\end{align*}
$\tilde{b}_i^1$,  $\tilde{c}_i^1$ are defined analogously.

 % \begin{align*}
 %(\underbrace{a\cdots a}_{k_1}\underbrace{b\cdots b}_{k_2}\underbrace{c\cdots c}_{k_3})_{1\cdots n}:=
 %	\begin{vmatrix}  a_1^1 &\cdots &a^1_n\\  &\cdots & \\   a_1^{k_1} &\cdots &a^{k_1}_n\\   b_1^1 &\cdots &b^1_n\\  &\cdots & \\   b_1^{k_2} &\cdots &b^{k_2}_n\\   c_1^1 &\cdots &c^1_n\\  &\cdots & \\   c_1^{k_3} &\cdots &c^{k_3}_n\\\end{vmatrix},
 % \end{align*}

 The following conditions must hold  % $k_1+k_2+k_3=3$, а также
    
     \begin{align}
      \begin{split}
      \label{usl}
& m_1=\# \{ i : \,\,\, k_1^i=1     \}   ,\,\,\,\,  m_2=\# \{ i : \,\,\, k_1^i=2     \}   ,\,\,\,\,  0=\# \{ i : \,\,\, k_1^i=3     \},\\
& m'_1=\# \{ i : \,\,\, k_2^i=1     \}   ,\,\,\,\, m'_2=\# \{ i : \,\,\, k_2^i=2     \}   ,\,\,\,\,   0=\# \{ i : \,\,\, k_2^i=3     \},\\
& M_1=\# \{ i : \,\,\, k_1^i=1     \}   ,\,\,\,\, M_2=\# \{ i : \,\,\, k_1^i=2     \}  ,\,\,\,\,   M_3=\# \{ i : \,\,\, k_1^i=3     \}.\\
\end{split}
     \end{align}
 they provide that  $g\in V\otimes W\otimes U$ (см.  \cite{zh}). 
    
 %\subsection{Проблема кратности для  $3-j$ символов}

%Используя связь коэффициентов Клебша-Гордана и  $3-j$ символов, можно сделать следующий вывод.
Indeed from one hand the index  $s$ indexing different   $3j$-symbols     \begin{equation}
\begin{pmatrix}
V& W& U\\ v_{\mu} & w_{\nu} & u_{\rho}
\end{pmatrix}^s
\end{equation}
    
 with the same inner indices coincides with the index numerating different Clebsh-Gordan coefficients
    
    $$ C^{\bar{U},\bar{\gamma},s}_{V,W;\alpha,\beta}.$$
    
  % Явная конструкция  старшего вектора представления ${\bf 1}^s$, предложенная в предыдущем разделе, дает другое решение проблемы кратности.

   %\begin{prop} $3-j$ символы  с одинаковыми внутренними символами нумеруются различными  произведениями  \eqref{skob} с условиями  \eqref{usl}.
   %	\end{prop}
   %	  Между этим решениями  проблемы кратности и решением,  происходящим из связи  с коэффициентами Клебша-Гордана,  несложно установить соответствие. Именно,
From the other hand since to the Clebsh-Gordan coefficient to the index   $s$ there corresponds a function \eqref{foo}, to which there corresponds an expression of type \eqref{skob}, that is constructed as follows. To factors of  \eqref{foo} there correspond factors  form \eqref{skob} by the ruler:

  \begin{align*}
 &a_1 \mapsto (acc),\,\,\, a_{1,2}  \mapsto (aac),\,\,\,	b_1 \mapsto (bcc),\,\,\, b_{1,2}  \mapsto (bbc),\,\,\,	\\
 & (ab)\mapsto (abc),\,\,\, (aabb)_{1,2,1,3}  \mapsto (aabbcc),\,\,\,\\
 &(aab)\mapsto (aab),\,\,\, (abb)  \mapsto (abb).
 	\end{align*}
    

 Thus  starting from the expression  \eqref{foo}  we have constructed an expression \eqref{skob}, this construction is one-to-one. That is we have an isomorphism.   
%    Так как это сопоставление есть изоморфизм,
  %   то получается, что
    Hence we have descibed all  $\mathfrak{gl}_3$-invariants of a triple tensor product.
    
 
\section{A formula for a  $3j$-symbol}

Let us find an expression for a  $3j$-symbol. A $3j$-symbol has a multiplicity index  $s$, which is a function \eqref{foo}.  To this function there corresponds a trivial represnetation in the triple tensor product with the highest vector

 \begin{equation}
 \label{goo}
g(\omega,\varphi,\psi,\theta):=\frac{(acc)^{\alpha}(bcc)^{\beta}(aac)^{\gamma}(bbc)^{\delta}(abc)^{\omega}(abb)^{\varphi}(aab)^{\psi}(aabbcc)^{\theta}}{\alpha!\beta!\gamma!\omega!\varphi!\psi!\theta!}.
 \end{equation}

We have changed the expression \eqref{skob} by adding division by factorials of exponents.

\subsection{ Lattices $B'_1$ and  $B''_1$}

Consider independent variables corresponding to summands in determinants  $(caa),(acc),...,(aabbcc)$.  Denote these variables as follows 

\begin{align}
\begin{split}
\label{perem}
&Z=\{[[c_1a_{2,3}] ,[c_2a_{1,3}] ,[c_3a_{1,2}] , [a_1c_{2,3}] , [a_{2}c_{1,3}] ,[a_3c_{1,2}] ,
[c_1b_{2,3}]  ,  [c_2b_{1,3}] , [c_3b_{1,2}] ,\\& [b_1c_{2,3}] ,[b_{2}c_{1,3}] ,[b_3c_{1,2}] ,
[b_1a_{2,3}]  ,  [b_2a_{1,3}] , [b_3a_{1,2}], [a_1b_{2,3}] , [a_{2}b_{1,3}] ,[a_3b_{1,2}] ,\\&
[a_1b_2c_3],[a_2b_3c_1], [a_3b_1c_2], [a_2b_1c_3],[a_1b_3c_2], [a_3b_2c_1]
\\&
[a_{2,3}b_{1,3}c_{1,2}]  ,    [a_{1,3}b_{1,2}c_{2,3}]  ,    [a_{1,2}b_{2,3}c_{1,3}]  , 
[a_{1,3}b_{2,3}c_{1,2}]  ,   [a_{2,3}b_{1,2}c_{1,3}]  ,  [a_{1,2}b_{1,3}c_{2,3}] \}.
\end{split}
\end{align}

These variables are coordinates in a  $30$-dimentional space. When one opens brackets in determinants occuring in $g$,  there appear monomials in variables  \eqref{perem}. The vectors of exponents of these monomials are vectors in $30$-dimentional space.

Consider a vector 

$$
v_0=(\gamma,0,0,\alpha,0,0,\delta ,0,0,\beta,0,0,\psi,0,0,\varphi,0,0,\omega,0,0,0,0,0,\theta,0,0,0,0,0)
$$

Such a vector of exponents is obtained in the case when one takes the first summand in each determinant. 
Note that either  $\omega=0$,  or   $\theta=0$.

If one changes a choice of summands that to the vector of exponents  $v_0$ one need to add one of the following vectors 

\begin{align*}
&p_1= e_{[c_1a_{2,3}]}-e_{ [c_2a_{1,3}] }, &   p_2=e_{[c_1a_{2,3}]}-e_{ [c_3a_{1,2}] }, \\&p_3= e_{[a_1c_{2,3}] }-e_{[a_2c_{1,3}] }, & p_4= e_{[a_1c_{2,3}] }-e_{[a_3c_{1,2}] },\\
&p_5= e_{[c_1b_{2,3}]}-e_{ [c_2b_{1,3}] }, &   p_6=e_{[c_1b_{2,3}]}-e_{ [c_3b_{1,2}] }, \\& p_7= e_{[b_1c_{2,3}] }-e_{[b_2c_{1,3}] }, & p_8= e_{[b_1c_{2,3}] }-e_{[b_3c_{1,2}] },\\
&p_9= e_{[a_1b_{2,3}]}-e_{ [a_2b_{1,3}] }, &   p_{10}=e_{[a_1b_{2,3}]}-e_{ [a_3b_{1,2}] }, \\& p_{11}= e_{[b_1a_{2,3}] }-e_{[b_2a_{1,3}] }, & p_{12}= e_{[b_1a_{2,3}] }-e_{[b_3a_{1,2}] },\\&
p_{13}=e_{[a_1b_2c_3]}-e_{[a_2b_3c_1]} & p_{14}=e_{[a_1b_2c_3]}-e_{[a_3b_1c_2]} \\
& p_{15}=e_{[a_1b_2c_3]}-e_{[a_2b_1c_3]} & p_{16}=e_{[a_1b_2c_3]}-e_{[a_1b_3c_2]}  \\& p_{17}=e_{[a_1b_2c_3]}-e_{[a_3b_2c_1]},
&p_{18}= e_{[a_{2,3}b_{1,3}c_{1,2}]}-e_{ [a_{1,3}b_{1,2}c_{2,3}] }, \\&   p_{19}= e_{[a_{2,3}b_{1,3}c_{1,2}]}-e_{ [a_{1,2}b_{2,3}c_{1,3}] }, &   p_{20}= e_{[a_{2,3}b_{1,3}c_{1,2}]}-e_{ [a_{1,3}b_{2,3}c_{1,2}] },  \\&   p_{21}= e_{[a_{2,3}b_{1,3}c_{1,2}]}-e_{ [a_{1,2}b_{1,3}c_{2,3}] }, &   p_{22}= e_{[a_{2,3}b_{1,3}c_{1,2}]}-e_{ [a_{2,3}b_{1,2}c_{1,3}] },
\end{align*}

Define projectors  

$$
pr_a,pr_b,pr_c:\mathbb{C}^{30}\rightarrow \mathbb{C}^6,
$$

which operate as follows. Given a vector of exponents for variables  \eqref{perem} they construct vectors of exponents for variables $a_X$, $b_X$, $c_X$.

Conside a vector  $v$ of exponents of a monomial in variables  \eqref{perem}, which corresponds to an arbitrary choice of summands in determinants. Let us find which vectors   $\tau\in\mathbb{Z}<p_1,...,p_{22}>$ has the following property. When one adds it to $v$ then to the projections  $pr_a$, $pr_b$, $pr_c$ vectors proportional to  $(0,1,-1,-1,1,0)$ are added.

\begin{defn} {\it An elementary cycle} is the following object.  Take two determinant from  $(caa),...,(aabbcc)$  and draw two arrows from a symbol  $x=a,b,c$  in one determinant to a symbol   $xx=aa,bb,cc$ in another determinant. Or from a symbol $x$ in one determinant to   $x$ in another determinant. Or from a symbol   $xx$ in one determinant to a symbol   $xx$ in another determinant. 	

The following two condition must be satisfied. One arrow goes from the first determinant to the second and another goes form the second to the first. And one of the arrows goes from   $x$ to $xx$.
\end{defn}
%Пусть множество концов и начал этих стрелок затрагивает какое-то множество определителей. Тогда это множество этих стрелок затрагивает все символы  $a,aa,b,cc,c,cc$ каждого из этих определителей, если он имеет вид  $(xyy)$  и два символа, если определитель имеет вид $(abc)$ или  $(aabbcc)$.

Here are examples of cycles:

\newpage

%\begin{aling*}
\[
(a\tikzmark{0}  bb\tikzmark{11} ) (\tikzmark{00} aa \tikzmark{1} b),\,\,\, 
%(a\tikzmark{2}  bb\tikzmark{33} ) (\tikzmark{44} cc \tikzmark{3} b) (\tikzmark{4}c  \tikzmark{22} aa)  ,\,\,\,
(a\tikzmark{5}  b\tikzmark{66} c) (\tikzmark{55} aa \tikzmark{6} bb cc),\,\,\,(a\tikzmark{90}  bb\tikzmark{911} ) (\tikzmark{900} a \tikzmark{91} bc),\,\,\,
(a\tikzmark{95}  bb\tikzmark{966} ) (\tikzmark{955} aa \tikzmark{96} bb cc).
\]
%\end{align*}

\begin{tikzpicture}[remember picture, overlay, bend left=45, -latex, blue]
\draw ([yshift=2ex]pic cs:0) to ([yshift=2ex]pic cs:00);
\draw ([yshift=0ex]pic cs:1) to ([yshift=0ex]pic cs:11);
%\draw ([yshift=2ex]pic cs:2) to ([yshift=2ex]pic cs:22);
%\draw ([yshift=0ex]pic cs:3) to ([yshift=0ex]pic cs:33);
%\draw ([yshift=0ex]pic cs:4) to ([yshift=0ex]pic cs:44);
\draw ([yshift=2ex]pic cs:5) to ([yshift=2ex]pic cs:55);
\draw ([yshift=0ex]pic cs:6) to ([yshift=0ex]pic cs:66);
\draw ([yshift=2ex]pic cs:90) to ([yshift=2ex]pic cs:900);
\draw ([yshift=0ex]pic cs:91) to ([yshift=0ex]pic cs:911);
\draw ([yshift=2ex]pic cs:95) to ([yshift=2ex]pic cs:955);
\draw ([yshift=0ex]pic cs:96) to ([yshift=0ex]pic cs:966);
\end{tikzpicture}

All other examples of cycles are obtained form these by applying a permutation of symbols  $a,b,c$.

To an elementary cycle there corresponds a vector  $\tau\in\mathbb{Z}<p_1,...,p_{22}>$ by the following ruler. To the elementary cycles written above there correspond vectors   

\begin{align}
\begin{split}
\label{vec}
& u_1=-e_{[a_1b_{2,3}]}+e_{[a_2b_{1,3}]}-e_{[b_1a_{2,3}]}+e_{[b_2a_{1,3}]},\\
& u_2=-e_{[a_1b_{2}c_{3}]}+e_{[a_2b_{1}c_{3}]}-e_{[a_{2,3}b_{1,3}c_{1,2}]}+e_{[a_{1,3}b_{2,3}c_{1,2}]},\\
& u_3=-e_{[a_1b_{2,3}]}+e_{[a_2b_{1,3}]}-e_{[a_{2,3}b_{1,3}c_{1,2}]}+e_{[a_{1,3}b_{2,3}c_{1,2}]},\\
& u_4=-e_{[a_1b_{2,3}]}+e_{[a_2b_{1,3}]}-e_{[a_{2,3}b_{1,3}c_{1,2}]}+e_{[a_{1,3}b_{2,3}c_{1,2}]},\\
\end{split}
\end{align}

Let  $B_1$  be an integer lattice spanned by the vectors correponding to elementary cycles. One has

\begin{prop}
	\label{prp}
	
	The lattice  $B_1$ is generated by vectors correspondig to elementary cycles of type

	\[
	%(a\tikzmark{990}  bb\tikzmark{9911} ) (\tikzmark{9900} a \tikzmark{991} bc),\,\,\,
	(a\tikzmark{995}  bb\tikzmark{9966} ) (\tikzmark{9955} aa \tikzmark{996} bb cc),\,\,\,
	(a\tikzmark{85}  bb\tikzmark{866} ) (\tikzmark{855} a \tikzmark{86} b c) 
	\]

	\begin{tikzpicture}[remember picture, overlay, bend left=45, -latex, blue]
	%\draw ([yshift=2ex]pic cs:990) to ([yshift=2ex]pic cs:9900);
	%\draw ([yshift=0ex]pic cs:991) to ([yshift=0ex]pic cs:9911);
	\draw ([yshift=2ex]pic cs:995) to ([yshift=2ex]pic cs:9955);
	\draw ([yshift=0ex]pic cs:996) to ([yshift=0ex]pic cs:9966);
	\draw ([yshift=2ex]pic cs:85) to ([yshift=2ex]pic cs:855);
	\draw ([yshift=0ex]pic cs:86) to ([yshift=0ex]pic cs:866);
	%\draw ([yshift=2ex]pic cs:890) to ([yshift=2ex]pic cs:8900);
	%\draw ([yshift=0ex]pic cs:891) to ([yshift=0ex]pic cs:8911);
	\end{tikzpicture}
\end{prop}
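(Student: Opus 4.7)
The plan is to reduce each elementary cycle to an integer combination of the two specified generator types by a ``triangulation'' (insertion) argument. I would first catalogue, up to the $S_3$-action permuting $\{a,b,c\}$, all pairs of determinants from $\{(caa),(acc),(aac),(aab),(abb),(bbc),(bcc),(cbb),(abc),(aabbcc)\}$ that can support an elementary cycle (that is, pairs between which one can draw two arrows with at least one going from a single letter $x$ to a doubled symbol $xx$). This yields a short finite list of non-generator types: for instance the $(xyy)\leftrightarrow(xxy)$ type (producing $u_1$), the $(xyz)\leftrightarrow(xxyyzz)$ type (producing $u_2$), the $(xyy)\leftrightarrow(yzz)$ type, and a few others mixing $3$-letter triples with doubled determinants.

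For each such non-generator pair $(D_1,D_2)$, the strategy is to pick an intermediate determinant $D_3$ chosen from $\{(xyz),(aabbcc)\}$ together with a suitable permutation of $\{a,b,c\}$, and then exhibit two generator cycles, one between $D_1$ and $D_3$ and one between $D_2$ and $D_3$, whose difference equals the given non-generator cycle. The point of the construction is to match arrows so that the $D_3$-coordinates cancel identically in the difference, leaving precisely the vector attached to the cycle $(D_1,D_2)$. For instance, $u_1$ (a $(abb)\leftrightarrow(aab)$ cycle) equals the difference of a generator cycle $(abb)\leftrightarrow(aabbcc)$ with arrows through the $(a_{2,3},a_{1,3})$ columns and a generator cycle $(aab)\leftrightarrow(aabbcc)$ using the same columns, since the $[a_{2,3}b_{1,3}c_{1,2}]$/$[a_{1,3}b_{2,3}c_{1,2}]$ terms cancel. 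Similarly, $u_2$, which connects $(abc)$ with $(aabbcc)$, is handled by inserting an intermediate $(abb)$ (or $(aab)$): both component cycles are then of the second and first generator type respectively, and the $(abb)$-coordinates cancel.

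Having verified one representative in each symmetry class, I would invoke the $S_3$-equivariance of the list of cycles (the generator families are stable under permutation of the three letters) to conclude that every elementary cycle vector lies in the $\mathbb{Z}$-sublattice generated by the two specified types. Since conversely both generator types are elementary cycles, this sublattice coincides with $B_1$.

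The main obstacle is not conceptual but bookkeeping: one must carefully fix sign conventions for the orientation of each arrow and for the induced vector in $\mathbb{Z}^{30}$, then verify that the ``insertion'' cancellation actually works in each case, particularly for the mixed cycles where $D_3$ shares different pairs of letters with $D_1$ and $D_2$ (the correct choice of permutation acting on $D_3$ must be made to align the cancelling monomials). Once the sign bookkeeping is pinned down, each case is a short explicit identity in $\mathbb{Z}^{30}$.
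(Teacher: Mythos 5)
Your proposal is correct and matches the paper's argument, which consists solely of the assertion that ``one proves directly that a vector corresponding to an elementary cycle can be expressed through these vectors'': your insertion/triangulation scheme is exactly that direct computation, e.g. the $(abb)\leftrightarrow(aab)$ cycle satisfies $u_1=v_1^a+v_1^b$ with the $(aabbcc)$-coordinates cancelling, and $u_2$ is obtained by inserting $(abb)$ between $(abc)$ and $(aabbcc)$. The only cycle types not already among the generators are $(xyy)\leftrightarrow(xxy)$ and $(xyz)\leftrightarrow(xxyyzz)$ --- a pair such as $(xyy),(yzz)$ from your preliminary catalogue in fact admits no elementary cycle, since only one arrow can be drawn between those determinants --- and both surviving types are handled correctly by your argument.
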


\proof

One proves directely that a vector corresponding to an elementary elementary cycle can be expressed through these vectors. 

\endproof

One has $B_1\subset B$.

In Proposition \ref{prp} two series of generators of the lattice   $B_1$ are listed.	 Let  $B'_1$ be a sublattice of   $B_1$, generated by elementary cycles of the first type. Let  $B''_1$ be a sublattice of   $B_1$, generated by elementary cycles of the second type.

In both cases the generators are  basis. The fact that the selected vectors are linearly independend follows form the fact that each vector containes coordinate vectors that are not involved in other vectors.

Thus in  $B'_1$ there exists a base

\begin{align}
\begin{split}
\label{bb1}
&v_1^a=-e_{[a_1b_{2,3}]}+e_{[a_2b_{1,3}]}-e_{[a_{2,3}b_{1,3}c_{1,2}]}+e_{[a_{1,3}b_{2,3}c_{1,2}]},\,\,\, v_2^a=-e_{[a_1c_{2,3}]}+e_{[a_2c_{1,3}]}-e_{[a_{2,3}b_{1,2}c_{1,3}]}+e_{[a_{1,3}b_{1,2}c_{2,3}]}\\
&v_1^b=-e_{[b_1a_{2,3}]}+e_{[b_2a_{1,3}]}-e_{[a_{1,3}b_{2,3}c_{1,2}]}+e_{[a_{2,3}b_{1,3}c_{1,2}]},\,\,\, v_2^b=-e_{[b_1c_{2,3}]}+e_{[b_2c_{1,3}]}-e_{[a_{1,2}b_{2,3}c_{1,3}]}+e_{[a_{1,2}b_{1,3}c_{2,3}]}\\
&v_1^c=-e_{[c_1a_{2,3}]}+e_{[c_2a_{1,3}]}-e_{[a_{1,3}b_{1,2}c_{2,3}]}+e_{[a_{2,3}b_{1,2}c_{1,3}]},\,\,\, v_2^c =-e_{[c_1b_{2,3}]}+e_{[c_2b_{1,3}]}-e_{[a_{1,2}b_{1,3}c_{2,3}]}+e_{[a_{1,2}b_{2,3}c_{1,3}]}%\\
%&v_3=-e_{[a_1b_{2}c_{3}]}+e_{[a_2b_{1}c_{3}]}-e_{[a_{2,3}b_{1,3}c_{1,2}]}+e_{[a_{1,3}b_{2,3}c_{1,2}]},\,\,\,\,\,
%v_4=-e_{[a_1b_{3}c_{2}]}+e_{[a_2b_{3}c_{1}]}-e_{[a_{2,3}b_{1,2}c_{1,3}]}+e_{[a_{1,3}b_{1,2}c_{2,3}]},\\&
%v_5=-e_{[a_3b_{1}c_{2}]}+e_{[a_3b_{2}c_{1}]}-e_{[a_{1,2}b_{2,3}c_{1,3}]}+e_{[a_{1,2}b_{1,3}c_{2,3}]}.
\end{split}
\end{align}

In the case when \eqref{goo}  contains   $(aabbcc)$ but does not contain  $(abc)$ one uses $B'_1$. In the case when \eqref{goo} contains $(abc)$ but does not contain  $(aabbcc)$ one uses  $B''_1$.

\begin{prop}
	
 Let 	 \eqref{goo} contain  $(aabbcc)$  and does not contain $(abc)$.  Take $30$-dimensional vectors  $\varpi$ ,  $\varpi'$ of exponents of two monomials in variables  $Z$ which one obtains when one opens brackets in  \eqref{goo}.  Let $[\mu,\nu,\rho]=[pr_a(\varpi),pr_b(\varpi),pr_c(\varpi)]$,    $[\mu',\nu',\rho']=[pr_a(\varpi'),pr_b(\varpi'),pr_c(\varpi')]$.

	 Then  $[\mu',\nu',\rho'] =[\mu,\nu,\rho]+b_1$, $b_1\in B'_1$ if and only if  simultaneously
	
	\begin{align*}& \mu=\mu' mod (0,-1,1,1,-1,0),
	& \nu=\nu' mod  (0,-1,1,1,-1,0),
	& \rho=\rho' mod  (0,-1,1,1,-1,0).\end{align*}
	
 In the case when \eqref{goo} contains  $(abc)$ but does not contain   $(aabbcc)$ the same is true  if one changes  $B'_1$  to  $B''_1$.
	
\end{prop}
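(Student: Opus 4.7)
My plan is to prove both directions of the equivalence by reducing them to a single direct computation: the values of $pr_a$, $pr_b$, $pr_c$ on the six basis vectors of $B'_1$ exhibited in \eqref{bb1}.

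First I would compute the three projections on each basis vector in turn. For example, reading the $a$-, $b$-, $c$-coordinate of each $Z$-variable occurring in
\[
v_1^a=-e_{[a_1b_{2,3}]}+e_{[a_2b_{1,3}]}-e_{[a_{2,3}b_{1,3}c_{1,2}]}+e_{[a_{1,3}b_{2,3}c_{1,2}]}
\]
gives $pr_a(v_1^a)=-e_{a_1}+e_{a_2}-e_{a_{2,3}}+e_{a_{1,3}}$, which in the coordinate order $(a_3,a_1,a_2,a_{1,3},a_{2,3},a_{1,2})$ fixed in Theorem \ref{vec3} is exactly $(0,-1,1,1,-1,0)$, while $pr_b(v_1^a)=-e_{b_{2,3}}+e_{b_{1,3}}-e_{b_{1,3}}+e_{b_{2,3}}=0$ and $pr_c(v_1^a)=-e_{c_{1,2}}+e_{c_{1,2}}=0$ because the matching $b$- and $c$-letters cancel in pairs. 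The same kind of inspection on each of $v_2^a$, $v_1^b$, $v_2^b$, $v_1^c$, $v_2^c$ yields the uniform pattern: every basis vector projects to $(0,-1,1,1,-1,0)$ in the slot labelled by its upper letter and to zero in the other two slots.

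From this table the forward direction is immediate. Writing $b_1=\lambda_a v_1^a+\lambda'_a v_2^a+\lambda_b v_1^b+\lambda'_b v_2^b+\lambda_c v_1^c+\lambda'_c v_2^c\in B'_1$, one collects
\[
(pr_a(b_1),\,pr_b(b_1),\,pr_c(b_1))=\bigl((\lambda_a+\lambda'_a)u,\,(\lambda_b+\lambda'_b)u,\,(\lambda_c+\lambda'_c)u\bigr),\qquad u:=(0,-1,1,1,-1,0),
\]
so $(\mu'-\mu,\nu'-\nu,\rho'-\rho)$ is componentwise an integer multiple of $u$ and the three congruences follow.

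For the reverse direction the same table provides a constructive recipe. Given the three congruences with integer multipliers $k_a,k_b,k_c$, set $b_1:=k_a v_1^a+k_b v_1^b+k_c v_1^c\in B'_1$; by the projection table we obtain $(pr_a(b_1),pr_b(b_1),pr_c(b_1))=(\mu'-\mu,\nu'-\nu,\rho'-\rho)$, which is exactly the relation $[\mu',\nu',\rho']=[\mu,\nu,\rho]+b_1$ claimed by the proposition. The statement for $B''_1$ in the case when $g$ contains $(abc)$ but not $(aabbcc)$ is proved by literally the same argument applied to the basis of $B''_1$ coming from the elementary cycles of the second type in Proposition \ref{prp}; one only needs to check that its six generators again project to $u$ in exactly one slot and to zero in the other two, which they do by construction.

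The main obstacle is purely bookkeeping: keeping track of signs and of the fixed coordinate order $(a_3,a_1,a_2,a_{1,3},a_{2,3},a_{1,2})$ throughout, so that each projection lands on $(0,-1,1,1,-1,0)$ rather than on a sign- or permutation-twisted variant. Once the twelve projection computations in Step 1 are carried out correctly, the rest of the proof is entirely mechanical.
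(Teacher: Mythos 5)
Your projection table for the basis \eqref{bb1} is correct, and it does give the ``only if'' direction exactly as the paper does (the paper dismisses that half as ``direct computations''). The gap is in the converse. The relation $[\mu',\nu',\rho']=[\mu,\nu,\rho]+b_1$, $b_1\in B'_1$, is the paper's (admittedly ill-typed) shorthand for the statement that the two $30$-dimensional exponent vectors themselves differ by a lattice element, $\varpi'-\varpi\in B'_1$. That stronger form is what the proposition is for: in Proposition \ref{prp1} the scalar product $\langle g,\mathcal{F}_{\mu}^{s_1}(A)\mathcal{F}_{\nu}^{s_2}(B)\mathcal{F}_{\rho}^{s_3}(C)\rangle$ is evaluated by summing over exactly the monomials of $g$ lying in the single coset $\varpi+B'_1$ (cf.\ \eqref{defz}), so one must know that \emph{every} monomial of $g$ whose projections lie in the prescribed $B_{GC}$-classes belongs to that coset. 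Your converse only produces \emph{some} $b_1\in B'_1$ whose three projections equal $(\mu'-\mu,\nu'-\nu,\rho'-\rho)$; it does not show that the actual difference $\varpi'-\varpi$ lies in $B'_1$. A telltale symptom is that you never use the hypothesis that $\varpi$ and $\varpi'$ both arise from opening the brackets in the same function \eqref{goo}: under your reading that hypothesis is vacuous, and the statement would hold for arbitrary vectors satisfying the congruences.

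What actually has to be proved is that the sublattice of $\mathbb{Z}\langle p_1,\dots,p_{22}\rangle$ (the lattice of changes of summand choices) consisting of vectors all three of whose projections are proportional to $(0,-1,1,1,-1,0)$ is precisely $B'_1$ (resp.\ $B''_1$). The paper's proof of the converse is a chasing argument: a change of summand in one determinant of \eqref{goo} alters one projection off its congruence class, which forces a compensating change of summand in a second determinant, which in turn forces one in a third, and so on until the chain closes up on the original determinant; the resulting closed chain is a sum of elementary cycles, hence an element of $B'_1$. Your proof needs this step (or an equivalent lattice computation), and your forward-direction table is a useful ingredient for it but does not replace it. The same remark applies verbatim to your treatment of the $B''_1$ case.
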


\begin{proof}
	Consider the case when   \eqref{goo}  contains  $(aabbcc)$  but does not contain  $(abc)$.

From  $[\mu',\nu',\rho'] =[\mu,\nu,\rho] +b_1$, $b_1\in B'_1$  it follows that
	$\mu=\mu' mod (0,-1,1,1,-1,0)$,
	$  \nu=\nu' mod  (0,-1,1,1,-1,0)$,
	$\rho=\rho' mod  (0,-1,1,1,-1,0)$. It can be proved by direct computations. 
	
	Let us prove the contraverse.
One changes the choice of summands in the determinants, such that  	$\mu=\mu' mod (0,-1,1,1,-1,0)$,
$  \nu=\nu' mod  (0,-1,1,1,-1,0)$,
$\rho=\rho' mod  (0,-1,1,1,-1,0)$.  For example we change  $a_1b_{2,3}$  from the determinant    $(abb)$  to  $a_2b_{1,3}$.   The  exponent of   $b_{2,3}$ reduces by  $1$ and the exponent of   $b_{1,3}$ increases by  $1$. The vector of exponents of determinants  $b$ must belong to the set  $\delta+ (0,-1,1,1,-1,0)$. Thus we the exponent of   $b_1$ must increase by  $1$ and the exponent of  $b_2$  must decrease by  $1$.  This can take place in the case when in another determinant say in   $(bcc)$ the summand $b_1c_{2,3}$ is changed to  $b_2c_{1,3}$. Then we consider the cahange of exponents of   $c_{2,3}$, $c_{1,3}$ and so on.  Finally we must return to the determinant  $(abb)$ and obtain the  change of exponent of determinants $a_1$ and   $a_2$ which takes place due to the change of a summand in the first determinant.
	
 This construction corresponds to the shift of the vector of exponents by a vector from $B'_1$.
	
\end{proof}

\subsubsection{Scalar  products}

Consider the case when   \eqref{goo} contains  $(aabbcc)$  and does not contain  $(abc)$.

Let us calculate scalar products. For  $s_1,s_2,s_3\in\mathbb{Z}_{\geq 0 }$ introduce a hypergeometric type series in variabels  $Z$:

\begin{align}
\begin{split}
\label{defz}
&\mathcal{F}_{\varpi}^{s_1,s_2,s_3}(Z,B'_1):=\sum_{t\in \mathbb{Z}^6}   \binom{t_1^a+t_2^a+s_1}{s_1}
%\binom{t_2^a+s_1}{s_1} 
\binom{t_1^b+t_2^b+s_2}{s_2}
%\binom{t_2^b+s_2}{s_2} 
\binom{t_1^c+t_2^c+s_3}{s_3}
%\binom{t_2^c+s_3}{s_3} 
\frac{Z^{\varpi+tv}}{  (\varpi+tv)!   },\\
&t=(t_1^1,t_2^a,t_1^b,t_2^b,t_1^c,t_2^c),\,\,\,\,tv=t^a_1v^a_1+t^a_2v^a_2+...,\,\,\, \varphi\in\mathbb{Z}^{30}\\
%& kf(t,s_1,s_2,s_3)=\prod_{i=1}^{s_1}(t_1+i)\cdot  \prod_{i=1}^{s_1}(t^a_1+i)\cdot \prod_{i=1}^{s_2}(t_1^b+i)\cdot  \prod_{i=1}^{s_2}(t^a_2+i)
%\prod_{i=1}^{s_3}(t_1^c+i)\cdot  \prod_{i=1}^{s_3}(t^c_2+i)
\end{split}
\end{align}

Introduce vectorw

\begin{align*}
&	f_a=-e_{[a_1c_{2,3}]}+e_{[a_3c_{1,2}]}-e_{[a_{2,3}b_{1,3}c_{1,2}]}-e_{  [a_{1,2}b_{1,3}c_{2,3} ] },\\
&f_b=-e_{[b_1c_{2,3}]}+e_{[b_3c_{1,2}]}-e_{[a_{1,3}b_{2,3}c_{1,2}]}-e_{  [a_{1,3}b_{1,2}c_{2,3} ] },\\
&f_c=-e_{[a_1b_{2,3}]}+e_{[a_3b_{1,2}]}-e_{[a_{1,2}b_{2,3}c_{1,3}]}-e_{  [a_{2,3}b_{1,2}c_{1,3} ] }.
\end{align*}

One has  (see. \eqref{vr})

\begin{align}
\begin{split}
\label{prabc}
&pr_a(\tau+f_a)=pr(\tau)+r,\\
&pr_b(\tau+f_a)=pr(\tau),\\
&pr_c(\tau+f_a)=pr(\tau),
\end{split}
\end{align}

and one has analogous conditions for  $f_b,f_c$.

Below we consider vectors $\varphi\in\mathbb{Z}^{30}$ and vectors $\mu,\nu,\rho$, that satisfy the following relations 

\begin{align}
\begin{split}
\label{vrph}
&\varpi=v_0+tv^{abc}+s_1f_a+s_2f_b+s_3f_c,\,\,\, t\in\mathbb{Z}^{9},\,\,\, s_1,s_2,s_3\in\mathbb{Z},\\
&\mu=pr_a(\varphi),\,\,\, \nu=pr_b(\varpi),\,\,\, \rho=pr_c(\varpi).
\end{split}
\end{align}

\begin{prop}
	\label{prp1}
Let  $\varpi, \mu,\nu,\rho$ satisfy the relations  \eqref{vrph}. 	 Then
	
	\begin{equation}
	\label{p1}
	<g,\mathcal{F}_{\mu}^{s_1}(A)\mathcal{F}_{\nu}^{s_2}(B)\mathcal{F}_{\rho}^{s_3}(C)>=
	\mathcal{F}_{\varpi}^{s_1,s_2,s_3}(\pm 1,B'_1),
	\end{equation}
	
	%	где $$\mathcal{F}_{\varpi}^{s_1,s_2,s_3}(Z,B_1):=$$.

Instead od those variables from $Z$ (see \eqref{perem}) which occur in a determinant with a sign $+$,  one substituts  $+1$, and instead of those  variables  which occur in a determinant with a sign  $-$, one substituts  $-1$.
	
\end{prop}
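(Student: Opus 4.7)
\medskip

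\noindent\textbf{Proof plan.} The strategy is to compute the left-hand side of \eqref{p1} directly by expanding everything into monomials in the variables $Z$ from \eqref{perem}, then using the differential description \eqref{skd} of the scalar product. First, I would expand each of the six determinants appearing in $g$ via the Leibniz rule, so that $(aabbcc)^{\theta}(abb)^{\varphi}(aab)^{\psi}(acc)^{\alpha}(bcc)^{\beta}(aac)^{\gamma}(bbc)^{\delta}$ becomes a signed sum of monomials $Z^{\varpi'}$ indexed by choices of summands in each determinant, with multinomial prefactors dividing through the factorials $\alpha!\beta!\gamma!\delta!\varphi!\psi!\theta!$. On the right, I would expand $\mathcal{F}_{\mu}^{s_1}(A)\mathcal{F}_{\nu}^{s_2}(B)\mathcal{F}_{\rho}^{s_3}(C)$ using the definition of $\mathcal{F}^{s}$ so that it becomes a sum over triples $(t^a,t^b,t^c)\in\mathbb{Z}^{3}$ of $A^{\mu+t^{a}v}B^{\nu+t^{b}v}C^{\rho+t^{c}v}$ divided by the appropriate factorials and weighted by the products $(t^{a}+1)\cdots(t^{a}+s_{1}-1)$, etc.

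\medskip

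\noindent Next, I would apply the scalar product \eqref{skd}: a monomial $Z^{\varpi'}$ in the expansion of $g$ pairs nontrivially with a summand $A^{\mu+t^{a}v}B^{\nu+t^{b}v}C^{\rho+t^{c}v}$ only when $pr_{a}(\varpi')=\mu+t^{a}v$, $pr_{b}(\varpi')=\nu+t^{b}v$, $pr_{c}(\varpi')=\rho+t^{c}v$; in that case the pairing contributes a factor of $(\mu+t^{a}v)!(\nu+t^{b}v)!(\rho+t^{c}v)!$, which cancels the factorials in the denominator of the corresponding term of $\mathcal F^{s_1}\mathcal F^{s_2}\mathcal F^{s_3}$, leaving only the signed multinomial coefficient from the expansion of $g$. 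By the previous proposition (characterising which monomials in the expansion of $g$ project to the same triple $(\mu,\nu,\rho)\bmod(0,-1,1,1,-1,0)$), the set of $\varpi'$ contributing to the scalar product is precisely $\varpi+B'_{1}$, shifted further by $s_{1}f_{a}+s_{2}f_{b}+s_{3}f_{c}$ to account for the extra $v$-shifts supplied by the $\mathcal{F}^{s_i}$-series — this is exactly the parametrisation enforced by \eqref{vrph} and \eqref{prabc}. So the surviving sum is indexed by $(t_{1}^{a},t_{2}^{a},t_{1}^{b},t_{2}^{b},t_{1}^{c},t_{2}^{c})\in\mathbb{Z}^{6}$, matching the summation in \eqref{defz}.

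\medskip

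\noindent Finally, I would collect the combinatorial prefactors. Each summand comes with (i) a sign from the Leibniz expansion of the determinants, which is precisely the sign attached to the monomial $Z^{\varpi'}$ when each variable in $Z$ that enters a determinant with sign $-1$ is replaced by $-1$ and each with sign $+1$ by $+1$; (ii) the multinomial coefficient $\alpha!/\prod(\text{summand exponents in the }a\text{-determinants})!$ etc., which after cancellation against $\alpha!\beta!\gamma!\delta!\varphi!\psi!\theta!$ in the denominator of $g$ becomes exactly $1/(\varpi'!)$ across the $30$ variables; (iii) the factors $(t^{a}+1)\cdots(t^{a}+s_{1}-1)$ from $\mathcal F^{s_1}$, which, once I parameterize $\varpi'=\varpi+t_1^a v_1^a+t_2^a v_2^a+\cdots$ and identify $t^{a}=t_1^a+t_2^a$, combine (together with the $\frac{1}{(s_{1}-1)!}$ absorbed into the normalization) into the binomial $\binom{t_{1}^{a}+t_{2}^{a}+s_{1}}{s_{1}}$ appearing in \eqref{defz}, and similarly for $s_{2},s_{3}$.

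\medskip

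\noindent The main technical obstacle is step (iii): book-keeping the identification $t^{a}=t_{1}^{a}+t_{2}^{a}$ and verifying that the $B'_{1}$-shifts $v_{1}^{a},v_{2}^{a}$ affect only the $a$-projection (and each by a multiple of $v$), so that the $\mathcal F^{s_1}$ variable $t^a$ really splits into the two lattice parameters on $B'_1$ associated with $a$. Once \eqref{prabc} is invoked this is a direct, if slightly tedious, verification; the sign bookkeeping then reduces precisely to the $\pm 1$ substitution rule stated in the proposition.
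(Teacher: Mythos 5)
Your proposal follows essentially the same route as the paper's own proof: expand $g$ into signed monomials in the variables $Z$, expand each $\mathcal{F}^{s_i}$ into terms $\frac{A^x}{x!}\frac{B^y}{y!}\frac{C^z}{z!}$, pair via \eqref{skd} using the matching of projections $[pr_a,pr_b,pr_c]$, and reindex the surviving terms by $(t_1^a,t_2^a,\dots,t_2^c)\in\mathbb{Z}^6$ with $\tau_i=t_1^\bullet+t_2^\bullet$ so that the binomial weights of \eqref{defz} and the $\pm1$ substitution rule emerge. The bookkeeping you flag in step (iii) is handled at the same level of detail in the paper, so the argument is correct and matches the intended one.
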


\proof

Consider a product   $\frac{A^x}{x!}\frac{B^y}{y!}\frac{C^z}{z!}$, occuring in $\mathcal{F}_{\mu}^{s_1}(A)\mathcal{F}_{\nu}^{s_2}(B)\mathcal{F}_{\rho}^{s_3}(C)$.  A coefficient at this product is defined as follows. If 
$x=\mu+\tau_1v$, $y=\nu+\tau_2 v$, $z=\rho+\tau_3v$, then the coefficient equals

\begin{equation}
\label{kf1}
%kf=(\tau_1+1)\cdot...\cdot (\tau_1+s_1) (\tau_2+1)\cdot...\cdot (\tau_2+s_2) (\tau_3+1)\cdot...\cdot (\tau_3+s_3)
\binom{\tau_1+s_1}{s_1}
% \binom{t_2^a+s_1}{s_1} 
\binom{\tau_2+s_2}{s_2}
%\binom{t_2^b+s_2}{s_2} 
\binom{\tau_3+s_3}{s_3}
%\binom{t_2^c+s_3}{s_3} 
\end{equation}

Now consider a scalar product

\begin{equation}
\label{gabc}
<g,\frac{A^x}{x!}\frac{B^y}{y!}\frac{C^z}{z!}>.
\end{equation}

This scalar product can be calculated as follows. In   \eqref{goo} the brackets are opened.  One obtains an expression which is a    $\Gamma$-series in variables  $Z$, where the variables occuring in a determinant with a  sign $-$ are taken with a sign  $-$.  
After this the variables $Z$  are replacet to products of variables  $A_X,B_X,C_X$ in an obvious way. 

Let   $h$  be a vector of exponents of a monomila in variables  $Z$,  which appears when one opens brackets in  \eqref{goo}.  Sucj a monomial is divided by factorials of its powers and is multiplied by  sign   $\pm$.
 The scalar  product \eqref{gabc} is non-zero if for some   $h$ one has  $[pr_a(h),pr_b(h),pr_c(h)]=[x,y,z]$. In this case  \eqref{gabc}  is obtained by a change in a suitable monomial of all variables  $Z$  to $1$ and a summation over all suitable monomials.

Finally let us write  $x=\mu+\tau_1v$, $y=\nu+\tau_2v$, $z=\rho+\tau_3v$. In the case $\tau_1=\tau_2=\tau_3=0$ one has  $[\mu,\nu,\rho]=[pr_a(\varpi),pr_b(\varpi),pr_c(\varpi)]$. Thus
$[x,y,z]=[pr_a(\varpi'),pr_b(\varpi'),pr_c(\varpi')]$, where 

\begin{align*}
& \varphi'=\varpi+t^a_1 v^a_1+t^a_2v^a_2+
t^b_1 v^b_1+t^b_2v^b_2+
t^c_1 v^c_1+t^c_2v^c_2,\\
&\tau_1=t^a_1+t^a_2,\,\,\, \tau_2=t^b_1+t^b_2,\,\,\, \tau_3=t^c_1+t^c_2,\,\,\, \end{align*}

Let us take into consideration the coefficient  \eqref{kf1} at  $\frac{A^x}{x!}\frac{B^y}{y!}\frac{C^z}{z!}$ ,occuring in  $\mathcal{F}_{\mu}^{s_1}(A)\mathcal{F}_{\nu}^{s_2}(B)\mathcal{F}_{\rho}^{s_3}(C)$.   As the result one gets an expression in the right hand side in  \eqref{p1}.

\endproof

Introduce vectors

\begin{align*}
&	f_a=-e_{[a_1c_{2,3}]}+e_{[a_3c_{1,2}]}-e_{[a_{2,3}b_{1,3}c_{1,2}]}-e_{  [a_{1,2}b_{1,3}c_{2,3} ] },\\
&f_b=-e_{[b_1c_{2,3}]}+e_{[b_3c_{1,2}]}-e_{[a_{1,3}b_{2,3}c_{1,2}]}-e_{  [a_{1,3}b_{1,2}c_{2,3} ] },\\
&f_c=-e_{[a_1b_{2,3}]}+e_{[a_3b_{1,2}]}-e_{[a_{1,2}b_{2,3}c_{1,3}]}-e_{  [a_{2,3}b_{1,2}c_{1,3} ] }.
\end{align*}

One has relations 

\begin{align}
\begin{split}
\label{prabc}
&pr_a(\tau+f_a)=pr(\tau)+r,\\
&pr_b(\tau+f_a)=pr(\tau),\\
&pr_c(\tau+f_a)=pr(\tau),
\end{split}
\end{align}

and analogous relations for  $f_b,f_c$.

\begin{prop}
	\label{prp2}
	Let  $\varpi, \mu,\nu,\rho$ satisfy the relations  \eqref{vrph}. 	 Then
	$$
	<g,\mathcal{F}_{\mu-s_1r}^{s_1}(A)\mathcal{F}_{\nu-s_2r}^{s_2}(B)\mathcal{F}_{\rho-s_3r}^{s_3}(C)>=
	\mathcal{F}_{\varpi-s_1f_a-s_2f_b-s_3f_c}^{s_1,s_2,s_3}(\pm 1,B_1),
	$$
Instead of those variables from $Z$ (see \eqref{perem}) which occur in a determinant with a sign $+$,  one substituts  $+1$, and instead of those  variables  which occur in a determinant with a sign  $-$, one substituts  $-1$.
	
\end{prop}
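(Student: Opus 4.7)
The plan is to reduce Proposition \ref{prp2} directly to Proposition \ref{prp1} by a change of variables that absorbs the three shifts $-s_i r$ into a single shift of $\varpi$. Set $\bar\mu := \mu - s_1 r$, $\bar\nu := \nu - s_2 r$, $\bar\rho := \rho - s_3 r$ and $\bar\varpi := \varpi - s_1 f_a - s_2 f_b - s_3 f_c$. First I would verify, using \eqref{prabc} together with its stated analogues for $f_b$ and $f_c$, that
$$
pr_a(\bar\varpi) = pr_a(\varpi) - s_1\, r = \mu - s_1 r = \bar\mu,
$$
and symmetrically $pr_b(\bar\varpi) = \bar\nu$, $pr_c(\bar\varpi) = \bar\rho$. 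Combined with the decomposition $\varpi = v_0 + t v^{abc} + s'_1 f_a + s'_2 f_b + s'_3 f_c$ supplied by \eqref{vrph}, one obtains
$$
\bar\varpi = v_0 + t v^{abc} + (s'_1 - s_1) f_a + (s'_2 - s_2) f_b + (s'_3 - s_3) f_c,
$$
so that the new quadruple $(\bar\varpi, \bar\mu, \bar\nu, \bar\rho)$ again satisfies the hypothesis \eqref{vrph}.

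Next I would apply Proposition \ref{prp1} to this shifted quadruple, which directly yields
$$
<g,\; \mathcal{F}^{s_1}_{\bar\mu}(A)\,\mathcal{F}^{s_2}_{\bar\nu}(B)\,\mathcal{F}^{s_3}_{\bar\rho}(C)> \;=\; \mathcal{F}^{s_1, s_2, s_3}_{\bar\varpi}(\pm 1, B'_1).
$$
Substituting back the definitions of $\bar\mu, \bar\nu, \bar\rho, \bar\varpi$ reproduces precisely the identity asserted in Proposition \ref{prp2}.

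The main point of care, and what I expect to be the principal obstacle, is the bookkeeping in the first step: one must check the projection identities \eqref{prabc} and their $b$- and $c$-analogues directly from the explicit formulas defining $f_a, f_b, f_c$ (in particular that each $f_x$ shifts only the $x$-projection, by exactly $r$, and leaves the other two projections unchanged), and then confirm that $\bar\varpi$ lies in the admissible class \eqref{vrph} so that Proposition \ref{prp1} is applicable. Both are direct term-by-term computations on the $30$-dimensional exponent vectors, and no new machinery is needed beyond that already deployed in the proof of Proposition \ref{prp1}.
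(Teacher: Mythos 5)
Your proof is correct and is exactly the argument the paper intends: its own proof consists of the single sentence that the statement ``follows immediately from Proposition \ref{prp1} and the formula \eqref{prabc}'', and your write-up simply makes explicit the bookkeeping (that each $f_x$ shifts only the corresponding projection by $r$, so the shifted quadruple again satisfies \eqref{vrph}) that the paper leaves implicit.
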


\proof

This Proposition  follows imidiately from Proposition  \ref{prp1} and the formula \eqref{prabc}.

\endproof

Introduce a function

\begin{equation}
\label{defz1}
\tilde{F}_{\varpi}(Z,B'_1):=\sum_{s_1,s_2,s_3\in\mathbb{Z}_{\geq 0}}(-1)^{s_1+s_2+s_3}	\mathcal{F}_{\varpi-s_1f_a-s_2f_b-s_3f_c}^{s_1,s_2,s_3}(Z,B'_1).
\end{equation}

Note that the function  $\mathcal{F}_{\varpi}^{0,0,0}(Z,B'_1)$ satisfies the GKZ equation and the function   $\tilde{F}_{\kappa}(Z,B'_1)$  satisfies the A-GKZ equaiton which consists of  3 equations of type

$$
(\frac{\partial^2}{\partial [a_1c_{2,3}]\partial [a_{2,3}b_{1,2}c_{1,3}]}-\frac{\partial^2}{\partial [a_2c_{1,3}]\partial [a_{1,3}b_{1,2}c_{2,3}]}+\frac{\partial^2}{\partial [c_3a_{1,2}]\partial [a_{1,2}b_{1,3}c_{2,3}]})\tilde{F}_{\varpi}(Z,B'_1)=0.
$$

%$$(\frac{\partial^2}{\partial [c_1a_{2,3}]\partial [a_{1,3}b_{1,2}c_{2,3}]}-\frac{\partial^2}{\partial [c_2a_{1,3}]\partial [a_{2,3}b_{1,2}c_{1,3}]})\mathcal{F}_{\varpi}^{s_1,s_2,s_3}(Z,B'_1)=
%\mathcal{F}_{\varpi-e_{ [c_1a_{2,3}]}-e_{[a_{1,3}b_{1,2}c_{2,3}]}}^{s_1-1,s_2,s_3}(Z,B'_1)$$

As a direct consequence of Proposition  \ref{prp2}  one gets

\begin{prop}
	Let  $\varpi, \mu,\nu,\rho$ satisfy the relations  \eqref{vrph}. 	 Then
	$$<g,\tilde{F}_{\mu}(A)\tilde{F}_{\nu}(B)\tilde{F}_{\rho}(C)>=\tilde{F}_{\kappa}(\pm 1,B'_1).$$
	
Instead of those variables from $Z$ (see \eqref{perem}) which occur in a determinant with a sign $+$,  one substituts  $+1$, and instead of those  variables  which occur in a determinant with a sign  $-$, one substituts  $-1$.
	
\end{prop}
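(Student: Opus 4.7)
The assertion is essentially a linear combination of the statement of Proposition~\ref{prp2}, so the plan is to unfold the three $\tilde F$ factors on the left-hand side according to their definition~\eqref{Ftildmu}, apply Proposition~\ref{prp2} termwise, and then recognise the resulting triple sum as the series $\tilde F_\kappa(\pm 1,B'_1)$ from~\eqref{defz1}.

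Concretely, I would first use~\eqref{Ftildmu} to write
\[
\tilde F_\mu(A)\tilde F_\nu(B)\tilde F_\rho(C)=\sum_{s_1,s_2,s_3\geq 0}\frac{(-1)^{s_1+s_2+s_3}}{s_1!\,s_2!\,s_3!}\,
\mathcal F^{s_1}_{\mu-s_1 r}(A)\mathcal F^{s_2}_{\nu-s_2 r}(B)\mathcal F^{s_3}_{\rho-s_3 r}(C).
\]
Since $g$ is a fixed polynomial and the scalar product is linear in the second argument, I can pull the triple sum out of $\langle g,\,\cdot\,\rangle$. Each inner scalar product is covered by Proposition~\ref{prp2}, so it is equal to $\mathcal F^{s_1,s_2,s_3}_{\varpi - s_1 f_a - s_2 f_b - s_3 f_c}(\pm 1,B'_1)$ (with the sign convention for the $Z$ variables as stated). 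Here I am using the hypothesis~\eqref{vrph} so that each triple $(\mu - s_1 r,\nu - s_2 r,\rho - s_3 r)$ still matches the projectors of $\varpi - s_1 f_a - s_2 f_b - s_3 f_c$ via~\eqref{prabc}.

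Substituting back, I obtain
\[
\langle g,\tilde F_\mu(A)\tilde F_\nu(B)\tilde F_\rho(C)\rangle
=\sum_{s_1,s_2,s_3\geq 0}(-1)^{s_1+s_2+s_3}\,
\mathcal F^{s_1,s_2,s_3}_{\varpi - s_1 f_a - s_2 f_b - s_3 f_c}(\pm 1,B'_1),
\]
where any factorials $1/(s_i!)$ coming from~\eqref{Ftildmu} are absorbed into the binomial coefficients inside $\mathcal F^{s_1,s_2,s_3}$ (this bookkeeping is forced by the normalisations in~\eqref{defz} and~\eqref{Ftildmu} and needs to be checked). Comparing the right-hand side with the definition~\eqref{defz1} of $\tilde F_\varpi(Z,B'_1)$ and specialising the $Z$ variables to $\pm 1$ in the prescribed way, one identifies the sum with $\tilde F_\kappa(\pm 1,B'_1)$ where $\kappa = \varpi$.

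The main (and essentially only) obstacle is the combinatorial check that the factorial prefactors from the univariate series $\mathcal F^s_{\mu-sr}$ combine correctly with the binomial prefactors in the trivariate series $\mathcal F^{s_1,s_2,s_3}_{\varpi-\cdots}$ and that the shifts $s_i r$ on $\mu,\nu,\rho$ are exactly compensated by the shifts $s_i f_a,s_i f_b,s_i f_c$ on $\varpi$ under $pr_a,pr_b,pr_c$; both are already handled by Proposition~\ref{prp2} and the relations~\eqref{prabc}, so once these are invoked the result follows by matching coefficients.
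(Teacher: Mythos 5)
Your proof is correct and is essentially the paper's own argument: the paper presents this proposition as ``a direct consequence of Proposition \ref{prp2}'', and your termwise expansion of the three $\tilde F$ factors via \eqref{Ftildmu}, followed by applying Proposition \ref{prp2} to each summand and matching against the definition \eqref{defz1} (with $\kappa=\varpi$), is exactly that derivation. The factorial bookkeeping you flag is a genuine wrinkle in the paper's normalisations (the prefactors $1/s_i!$ from \eqref{Ftildmu} versus the coefficients in \eqref{defz} and \eqref{defz1}), but it is an inconsistency of the source rather than a gap in your argument.
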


Introduce a function

\begin{equation}
\label{funs}
F_{\varpi}(Z,B'_1):=\sum_{s_1,s_2,s_3\in\mathbb{Z}_{\geq 0}}f^{pr_a(\varpi)}_{s_1}f^{pr_b(\varpi)}_{s_2}f^{pr_c(\varpi)}_{s_3}F_{\varpi-s_1f^a-s_2f^b-s_3f^c}(Z,B'_1),
\end{equation}

where coefficients $f$  are defined in  \eqref{kfs}. Using the relation   \eqref{fs0}, one gets the following statement

\begin{prop}
Let   \eqref{goo} contain   $(aabbcc)$ and does not contain  $(abc)$, let  $\varpi, \mu,\nu,\rho$ satisfy the relations  \eqref{vrph}. 	 Then

	$$<g,F_{\mu}(A)F_{\nu}(B)F_{\rho}(C)>=F_{\varpi}(\pm 1,B'_1)$$
	
	Let   \eqref{goo} contain  $(abc)$ and does not contain  $(aabbcc)$, then

	$$<g,F_{\mu}(A)F_{\nu}(B)F_{\rho}(C)>=F_{\varpi}(\pm 1,B''_1)$$
	
Instead of those variables from $Z$ (see \eqref{perem}) which occur in a determinant with a sign $+$,  one substituts  $+1$, and instead of those  variables  which occur in a determinant with a sign  $-$, one substituts  $-1$.
\end{prop}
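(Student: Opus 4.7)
The plan is to reduce the statement about the $F$-basis to the preceding proposition about the $\tilde{F}$-basis via the base-change relation \eqref{fs0}. Concretely, I would start by expanding each factor in the triple product using
$$F_{\mu}(A) = \sum_{s_1\geq 0} f^{\mu}_{s_1}\tilde{F}_{\mu-s_1 r}(A),\qquad F_{\nu}(B) = \sum_{s_2\geq 0} f^{\nu}_{s_2}\tilde{F}_{\nu-s_2 r}(B),\qquad F_{\rho}(C) = \sum_{s_3\geq 0} f^{\rho}_{s_3}\tilde{F}_{\rho-s_3 r}(C),$$
with the coefficients $f^{\cdot}_{s_i}$ given in \eqref{kfs}. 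By linearity of the scalar product, $\langle g, F_{\mu}(A)F_{\nu}(B)F_{\rho}(C)\rangle$ becomes a triple sum over $s_1,s_2,s_3\in\mathbb{Z}_{\geq 0}$ of terms $f^{\mu}_{s_1}f^{\nu}_{s_2}f^{\rho}_{s_3}\langle g, \tilde{F}_{\mu-s_1 r}(A)\tilde{F}_{\nu-s_2 r}(B)\tilde{F}_{\rho-s_3 r}(C)\rangle$.

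Next I would apply the preceding proposition term-by-term. To do so I must verify that if $(\mu,\nu,\rho)$ is the triple of projections of $\varpi$ satisfying \eqref{vrph}, then the shifted triple $(\mu-s_1 r,\nu-s_2 r,\rho-s_3 r)$ is the triple of projections of the shifted vector $\varpi-s_1 f_a - s_2 f_b - s_3 f_c$. This is exactly the content of \eqref{prabc}: adding $f_a$ shifts only $pr_a$ by $r$ while leaving $pr_b,pr_c$ untouched, and analogously for $f_b,f_c$. So the shifted quadruple again satisfies \eqref{vrph}, and the preceding proposition evaluates each inner scalar product as $\tilde{F}_{\varpi-s_1 f_a - s_2 f_b - s_3 f_c}(\pm 1, B'_1)$.

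Assembling the sum, I obtain
$$\langle g, F_{\mu}(A)F_{\nu}(B)F_{\rho}(C)\rangle = \sum_{s_1,s_2,s_3\geq 0} f^{\mu}_{s_1} f^{\nu}_{s_2} f^{\rho}_{s_3}\,\tilde{F}_{\varpi-s_1 f_a-s_2 f_b-s_3 f_c}(\pm 1, B'_1),$$
which, by \eqref{funs} (with $pr_a(\varpi)=\mu$, $pr_b(\varpi)=\nu$, $pr_c(\varpi)=\rho$), is precisely $F_{\varpi}(\pm 1, B'_1)$. For the second case, where \eqref{goo} contains $(abc)$ but not $(aabbcc)$, the argument is identical verbatim after replacing the lattice $B'_1$ by $B''_1$ throughout, since the definitions and the earlier propositions have been set up symmetrically for the two cases.

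The main thing to be careful about is the bookkeeping of signs: when the determinants are expanded one must track which variables in $Z$ enter with sign $+$ and which with sign $-$, and confirm that this sign assignment is preserved under the shifts by $f_a,f_b,f_c$ (so that the $\pm 1$ substitution rule is unambiguous). This is routine but is the only place where an error could silently occur, so I would check it explicitly on the generators of $B'_1$ listed in \eqref{bb1} before passing to general shifts.
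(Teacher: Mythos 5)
Your proposal is correct and follows exactly the route the paper intends: the paper's entire justification is the phrase ``Using the relation \eqref{fs0}, one gets the following statement,'' and you have simply filled in the details — expanding each $F$ factor in the $\tilde{F}$ basis via \eqref{fs0}/\eqref{kfs}, applying the preceding $\tilde{F}$-proposition termwise (with \eqref{prabc} guaranteeing the shifted data still satisfy \eqref{vrph}), and matching the resulting triple sum against the definition \eqref{funs}. The only caveat is that \eqref{funs} as printed has $F$ rather than $\tilde{F}$ on its right-hand side; your reading (that this is a typo and the sum should run over $\tilde{F}_{\varpi-s_1f_a-s_2f_b-s_3f_c}$) is the one that makes the argument close, and is clearly what the author intends.
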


\subsection{ $3j$-symbols}

\subsection{Selection rulers for   $3j$-symbols}
Let us answer the following question: which products $\mathcal{F}_{\mu}(a)\mathcal{F}_{\nu}(b)\mathcal{F}_{\rho}(c)$  can have a non-zero scalar product with a function   $g$,  of type  \eqref{goo}.  

Let us consider an A-GKZ realization.  In this realization the vector $\mathcal{F}_{\mu}(a)\mathcal{F}_{\nu}(b)\mathcal{F}_{\rho}(c)$   is presented by an expression of type    $F_{\mu}(A)F_{\nu}(A)F_{\rho}(C)$.  The function  $g$  is presented by an expression of type  $g+pl$, where  $pl$ is proportional to  $A_1A_{2,3}-A_{2}A_{1,3}+A_{3}A_{1,2}$,.... Since the function  $F_{\mu}(A),F_{\nu}(A),F_{\rho}(C)$ is a solution of the A-GKZ system, one has

$$
<F_{\mu}(A)F_{\nu}(A)F_{\rho}(C), g+pl>=<F_{\mu}(A)F_{\nu}(A)F_{\rho}(C), g>.
$$

Thus we need to consider the scalar product    $<F_{\mu}(A)F_{\nu}(A)F_{\rho}(C), g>$.

A support of a function presented as a power series is a set of vectors of exponents of it's monomials.
Note that the support of a function $g$, defined in  \eqref{goo}, considered as a function of determinants
$a_X,b_X,c_X$
is the following

$$
supp(g)=(\kappa+B)\cap (\mathbb{Z}_{\geq 0}^{18})
$$
for some vector  $\kappa\in\mathbb{Z}_{\geq 0}^{18}$ and some lattice  $B$.  The vector  $\kappa$ and the generators of  $B$ can be written in the following manner:

\begin{align*}
&\kappa=[pr_a(v_0),pr_b(v_0),pr_c(v_0)],
& B=\mathbb{Z}<\pi_1,...,\pi_{22}>,\,\,\,\,\, \pi_i=[pr_a(p_i),pr_b(p_i),pr_c(p_i)]
\end{align*}

The function $F_{\mu}(A)F_{\nu}(A)F_{\rho}(C)$ can have a non-zero scalar product with  $g$  only if the following intersection is non-empty

$$
supp(F_{\mu}(A)F_{\nu}(A)F_{\rho}(C)) \cap supp(g)
$$

This condition is written explicitely as follows

\begin{align}
\begin{split}
\label{potb}
&(\bigcup_{s_1,s_2,s_3\in \mathbb{Z}_{\geq 0}} [mu+B_{GC}-s_1r, nu+B_{GC}-s_1r,rho+B_{GC}-s_2r ]\cap (\mathbb{Z}_{\geq 0}^{18})) \cap supp(g)\neq \emptyset
\end{split}
\end{align}

A condition for a vector  $\varphi$, than provides us that the vector  $[\mu,\nu,\rho]=[pr_a(\varpi),pr_b(\varpi),pr_c(\varpi)]$  satisfies  \eqref{potb}  is just the condition  \eqref{vrph}.%  с отрицательными  $s_i$.

\subsection{A formula for a $3j$-symbol}

Let us write
\begin{equation}
g=\sum_{\mu\nu\rho} c_{\mu',\nu',\rho'} \mathcal{F}_{\mu'}(a)\mathcal{F}_{\nu'}( b)\mathcal{F}_{\rho'}(c),
\end{equation}
where $ c_{\mu',\nu',\rho'}$  is a  $3j$-symbol  \eqref{3j}.  Change  determinants $a_X$, $b_X$,  $c_X$ to independent variables $A_X$, $B_X$, $C_X$. Then the equalities do hold only the Plucker relations.

Take a scalar product of both sides of this equality with $F_{\mu}(A)\mathcal{F}_{\nu}(B)\mathcal{F}_{\rho}(C)$.  Since these functions are solution of the A-GKZ system one can ignore the fact that the previous equality holds only the Plucker relation.

Using the previous caculations one obtaines the Theorem.

%5 $$
%  c_{\mu,\nu,\rho}=\frac{}{}
% $$

%Подведем итог наших рассуждений.
\begin{thm}
	\label{ost}
	Let us be given representations with the highest weights  $[m_1,m_2,0]$, $[m'_1,m'_2,0]$, $[M_1,M_2,M_3]$. Let be given Gelfand-Tsetlin base vectors, the    $\Gamma$-series that correspond to them have shift vectors   $\mu,\nu,\rho$ (the formula for the shift vectors see in Theorem \ref{vec3}). Fix a fucntion of type  \eqref{goo} with exponents satisfying conditions of Proposition \ref{mlt}. 
	
	 Take a vector  $\varpi$ wich is related by the equalities   \eqref{vrph}    with the vectors   $\mu,\nu,\rho$ (if it is not possible, than the  $3j$-symbol equals to zero).

	Then a    $3j$-symbol  \eqref{3j}     equals
	\begin{align}
	\begin{split}
	\label{osntf2}
	&\frac{F_{\varpi}(\pm 1,B'_1)}{\mathcal{F}_{\mu}(1)\mathcal{F}_{\nu}(1)\mathcal{F}_{\rho}(1)}, \text{ if  \eqref{goo} does not contian  $(abc)$},\\
	&\frac{F_{\varpi}(\pm 1,B''_1)}{\mathcal{F}_{\mu}(1)\mathcal{F}_{\nu}(1)\mathcal{F}_{\rho}(1)}, \text{ if \eqref{goo} does not contain  $(aabbcc)$},
	\end{split}
	\end{align}   where the function occuring in the numerator is defined in   \eqref{funs} (see also  \eqref{defz1}, \eqref{defz}).

	In the function  $F_{\varpi}$ in the numerator we substitute $+1$ instead of those   $Z$ (see \eqref{perem}), that occur in determinants in   \eqref{goo} with the sign  $+$, and instead of    those   $Z$ (see \eqref{perem}), that occur in determinants in   \eqref{goo} with the sign  $-$  we substitute $-1$.
	
	In the denominator the $\Gamma$-series introduced in Theorem \ref{vec3} occur. WE substitute instead of all their arguments   $1$.

\end{thm}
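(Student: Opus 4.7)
The plan is to extract the $3j$-symbol by pairing the invariant $g$ against a suitable dual basis in the A-GKZ realization and reading off a single coefficient. Starting from the definition of the $3j$-symbols as the expansion coefficients
\[
g=\sum_{\mu',\nu',\rho'} c_{\mu',\nu',\rho'}\,\mathcal{F}_{\mu'}(a)\mathcal{F}_{\nu'}(b)\mathcal{F}_{\rho'}(c),
\]
I would lift this identity to the A-GKZ realization by substituting independent antisymmetric variables $A_X,B_X,C_X$ for the determinants $a_X,b_X,c_X$; the equality then holds only modulo the Pl\"ucker relations.

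Next, I would pair both sides with $F_{\mu}(A)F_{\nu}(B)F_{\rho}(C)$ under the invariant scalar product \eqref{skd}. A key point, already used in the selection rules subsection, is that this pairing kills the Pl\"ucker ambiguity: each factor $F$ solves the A-GKZ system \eqref{agkz}, whose differential operator is precisely the dual (under $\curvearrowright$) to multiplication by the Pl\"ucker quadratic, so the correction terms $pl$ paired against $F_{\mu}(A)F_{\nu}(B)F_{\rho}(C)$ contribute zero. Thus the equality becomes exact after the pairing.

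The left-hand side is then evaluated by the last Proposition of the previous subsection: $\langle g,F_{\mu}(A)F_{\nu}(B)F_{\rho}(C)\rangle$ equals $F_{\varpi}(\pm 1,B'_1)$ or $F_{\varpi}(\pm 1,B''_1)$ depending on whether $g$ contains $(aabbcc)$ or $(abc)$, where $\varpi$ is related to $(\mu,\nu,\rho)$ by \eqref{vrph}. On the right-hand side, the scalar product factors and by the calculation leading to \eqref{ds} one has $\langle F_{\mu}(A),\mathcal{F}_{\mu'}(A)\rangle=\mathcal{F}_{\mu}(1)$ when $\mu'\equiv\mu\bmod B_{GC}$ and vanishes otherwise; the same holds for the $B$ and $C$ factors. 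Since distinct Gelfand-Tsetlin diagrams of the same weight produce shift vectors in distinct cosets mod $B_{GC}$, only the single triple $(\mu',\nu',\rho')=(\mu,\nu,\rho)$ survives, contributing $c_{\mu,\nu,\rho}\,\mathcal{F}_{\mu}(1)\mathcal{F}_{\nu}(1)\mathcal{F}_{\rho}(1)$. Equating the two sides and solving for $c_{\mu,\nu,\rho}$ yields the formula \eqref{osntf2}, while non-existence of a $\varpi$ satisfying \eqref{vrph} makes the support condition \eqref{potb} fail and forces the $3j$-symbol to vanish.

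The main obstacle I anticipate is not a new computation but the careful bookkeeping of which $\varpi$ (and hence which $B'_1$ or $B''_1$) is the correct one to use given the structure of $g$, and verifying that no other triple $(\mu',\nu',\rho')$ in the expansion of $g$ can sneak into the surviving coset and spoil the extraction of a single coefficient; this is exactly where the dichotomy of Proposition \ref{mlt} (either $\omega=0$ or $\theta=0$) pays off, ensuring the lattice $B'_1$ or $B''_1$ is unambiguously determined by the multiplicity index.
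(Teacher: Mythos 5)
Your proposal follows essentially the same route as the paper: expand $g$ in the basis $\mathcal{F}_{\mu'}(a)\mathcal{F}_{\nu'}(b)\mathcal{F}_{\rho'}(c)$ with the $3j$-symbols as coefficients, lift to the A-GKZ realization, pair with $F_{\mu}(A)F_{\nu}(B)F_{\rho}(C)$ (which annihilates the Pl\"ucker ambiguity), evaluate the left side by the final Proposition of the preceding subsection and the right side by the orthogonality $\langle F_{\mu},\mathcal{F}_{\mu'}\rangle=\mathcal{F}_{\mu}(1)\,\delta_{\mu\equiv\mu'}$. In fact your write-up supplies more of the bookkeeping (coset separation of distinct diagrams, the role of the $\omega=0$ or $\theta=0$ dichotomy in fixing $B'_1$ versus $B''_1$) than the paper's own two-line argument does.
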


\section{ Appendix: simpler selection rulers}
\label{dop}

In this section we present conditons under which a Clebsh-Gordan coefficient or a   $3j$-symbol can be non-zero.

\subsection{Selection rulers for Clebsh-Gordan coefficients}

Let us find necessary condition for the elements of the diagrams \eqref{3d}  under which the Clebsh-Gordan coefficient  $C^{U,\gamma,s}_{V,W;\alpha,\beta}$ can be non-zero.

Let us find condition for upper rows that are  $\mathfrak{gl}_3$-highest weights. Let the index  $s$  correspond to the function \eqref{foo}.  In tern of exponents of  \eqref{foo} these upper rows are written as follows 

\begin{align}
\begin{split}
\label{s3d}
&[m_1,m_2,0]=[\alpha+\gamma+\omega+\varphi+\psi+\theta,\gamma+\psi+\theta,0],\\
&[m'_1,m'_2,0]=[\beta+\delta+\omega+\varphi+\psi+\theta, \delta+\varphi+\theta ,0]\\
&[M_1,M_2,M_3]=[\alpha+\beta+\gamma+\delta+\omega+\varphi+\psi+2\theta, \gamma+\delta+\omega+\varphi+\psi+\theta,\varphi+\psi+\theta]\\
\end{split}
\end{align}

One sees that between these rows one has a relation
\begin{align}
\begin{split}
\label{s3}
&[m_1,m_2,0]+[m'_1,m'_2,0]+\omega[-1,1,0]+(\varphi+\psi)[-1,0,1]+\theta[0,-1,1]=\\&=[M_1,M_2,M_3].
\end{split}
\end{align}

Moreover this relation is sufficient for existence of  $\alpha,...,\omega$, for which the conditions \eqref{s3d} hold.

Let us find selection rulers for the second rows of diagrams \eqref{3d}.  One can obtain them by considering the selection rulers for the Clebsh-Gordan coefficients for the algebra $\mathfrak{gl}_2$. One has a tensor product of representations of the algebra  $\mathfrak{gl}_2$ with highest weights   $[k_1,k_2]$  and  $[\bar{k}_1,\bar{k}_2]$.  Which highest weights  $[K_1,K_2]$ occur in a decomposition of this tensor product?

A base in the space of  $\mathfrak{gl}_2$-highest vectors in the functional realization is foremed by functions of type 

\begin{equation}
\label{foo2}
f=a_1^{\alpha}a_{1,2}^{\beta}b_1^{\gamma}b_{1,2}^{\delta}(ab)^{\omega}.
\end{equation}

Then

\begin{align}
\begin{split}
\label{2d}
&[k_1,k_2]=[\alpha+\beta+\omega,\beta],\\
&[k'_1,k'_2]=[\gamma+\delta+\omega,\delta],\\
&[K_1,K_2]=[\alpha+\beta+\gamma+\delta+\omega,\beta+\delta+\omega].
\end{split}
\end{align}

One sees that the following condition holds
\begin{align}
\begin{split}
\label{s2}
[k_1,k_2]+[k'_1,k'_2]+\omega[-1,1]=[K_1,K_2].
\end{split}
\end{align}

The relations  \eqref{s2} are sufficient for existence of  $\alpha,...,\omega$, for which
the conditions \eqref{2d} hold.

Considering  $E_{1,1}$-weights of diagrams one concludes that
for the third rows one has

\begin{equation}
\label{s1}
s+s'=S.
\end{equation}

Thus we have proved the following.

\begin{prop}
	A Clebsh-Gordan coefficient can be non-zero only if conditions   \eqref{s3}, \eqref{s2}, \eqref{s3} hold for some non-negative integers  $\omega$, $\varphi$, $\psi$, $\theta$.
\end{prop}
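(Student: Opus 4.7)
The plan is to derive a separate necessary condition from each of the three rows of the Gelfand-Tsetlin diagrams in \eqref{3d}, exploiting the fact that the three rows encode, respectively, the $\mathfrak{gl}_3$-highest weight, the highest weight of the restriction to the $\mathfrak{gl}_2$ subalgebra generated by $E_{i,j}$ with $i,j\in\{1,2\}$, and the eigenvalue of $E_{1,1}$. Each of these is preserved by the tensor-product coupling that defines a Clebsch-Gordan coefficient, so each yields an independent necessary constraint, and the three together give the proposition.

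For the top rows I would invoke Proposition \ref{mlt}: if the Clebsch-Gordan coefficient $C^{U,\gamma,s}_{V,W;\alpha,\beta}$ is non-zero, the multiplicity index $s$ must be represented by some function of the form \eqref{foo} whose $\mathfrak{gl}_3$-weight is the highest weight $[M_1,M_2,M_3]$ of $U$. The weight of \eqref{foo} is obtained by adding up the column-sums of the determinantal factors, giving precisely \eqref{s3d}. Eliminating the inner exponents $\alpha,\beta,\gamma,\delta$ by means of the defining relations \eqref{usl0} then produces the identity \eqref{s3} with non-negative integers $\omega,\varphi,\psi,\theta$.

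For the middle rows I would restrict the whole setup to the subalgebra $\mathfrak{gl}_2\subset\mathfrak{gl}_3$. By the definition of the Gelfand-Tsetlin basis, the middle row of each diagram is the $\mathfrak{gl}_2$-highest weight of the $\mathfrak{gl}_2$-irreducible containing the corresponding vector. A non-zero CG coefficient forces $[K_1,K_2]$ to occur in the $\mathfrak{gl}_2$-tensor-product decomposition of the relevant $\mathfrak{gl}_2$-subrepresentations of $V$ and $W$. Running the functional-realization argument one level down, the $\mathfrak{gl}_2$-highest vectors are exactly the functions \eqref{foo2} with non-negative integer exponents, and extracting the $\mathfrak{gl}_2$-weight yields \eqref{2d}; eliminating the inner exponents gives \eqref{s2}.

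For the bottom entries $s,s',S$, which are the $E_{1,1}$-eigenvalues of $v_\mu,w_\nu,u_\rho$, I would use that $E_{1,1}$ acts on a tensor product as a derivation, so conservation of this eigenvalue under the coupling $v_\mu\otimes w_\nu\mapsto u_\rho$ gives $s+s'=S$, i.e.\ \eqref{s1}. Combining the three necessary conditions yields the proposition. There is no real obstacle: the only point that requires a moment's care is checking that the middle-row restriction argument really does apply, which follows directly from the fact that the second row of a Gelfand-Tsetlin diagram is, by construction, a $\mathfrak{gl}_2$-highest weight so that $\mathfrak{gl}_2$-selection rules may be invoked verbatim.
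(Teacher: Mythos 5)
Your proposal is correct and follows essentially the same route as the paper: the top-row condition \eqref{s3} from the weight of the highest vector \eqref{foo} via \eqref{s3d}, the middle-row condition \eqref{s2} from the $\mathfrak{gl}_2$ selection rules via the functional realization \eqref{foo2} and \eqref{2d}, and the bottom-row condition \eqref{s1} from conservation of the $E_{1,1}$-eigenvalue. Your added justification that the second row of a Gelfand--Tsetlin diagram is by construction a $\mathfrak{gl}_2$-highest weight is a welcome (if brief) elaboration of a step the paper leaves implicit, but it does not change the argument.
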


%\subsection{Selection rulers for  $3j$-symbols}

%The selection rulers for $3j$-symbols can be obtained using a relation between  Clebsh-Gordan coefficients and $3j$-symbols and the dualization ruler   \eqref{3ddu}.

\end{document}